\documentclass[11pt,reqno]{amsart}

\usepackage{amssymb,amsmath,amscd, amsthm}
\usepackage{latexsym,bm,bbm,mathrsfs}
\usepackage{hyperref,graphicx,enumerate}
\usepackage{mathtools}
\usepackage{stmaryrd}
\usepackage[all,pdf]{xy}
\usepackage{graphicx}
\usepackage{tikz,circledsteps,tikz-cd}
\usepackage{color}
\usepackage[normalem]{ulem}
\graphicspath{ {./} }
\usepackage{stackrel}
\usepackage[left=30mm, right=30mm, top=30mm, bottom=30mm]{geometry}
\usepackage[shortlabels]{enumitem}
\usepackage{verbatim}

\allowdisplaybreaks
\newtheorem{thm}{Theorem}[section]

\newtheorem*{quest*}{Question}
\newtheorem{theorem}[thm]{Theorem}

\newtheorem{lemma}[thm]{Lemma}

\newtheorem{conj}[thm]{Conjecture}

\newtheorem*{theorem*}{Theorem}

\theoremstyle{definition}

\theoremstyle{remark}
\newtheorem{remark}[thm]{Remark}

\newcommand{\bbc}{{\mathbb{C}}}

\newcommand{\bbq}{{\mathbb{Q}}}

\newcommand{\bbz}{{\mathbb{Z}}}
\newcommand{\cO}{{\mathcal{O}}}
\newcommand{\Gal}{\operatorname{Gal}}

\newcommand{\Tr}{\operatorname{Tr}}

\newcommand{\tr}{\operatorname{Tr}}

\newtheoremstyle{named}{}{}{\itshape}{}{\bfseries}{.}{.5em}{\thmnote{#3}#1}
\theoremstyle{named}
\newtheorem*{namedtheorem}{}

\usepackage{enumitem}

\begin{document}

\title[Analytic Rank-One Elliptic Curves over Function Fields]{Analytic Rank-One Elliptic Curves over Function Fields and their rank over certain ring class fields}

\author{Seokhyun Choi}
\address{Department of Mathematical Sciences, KAIST, 291 Daehak-ro, Yuseong-gu, Daejeon, 34141, South Korea}
\email{sh021217@kaist.ac.kr}

\author{Bo-Hae Im}
\address{Department of Mathematical Sciences, KAIST, 291 Daehak-ro, Yuseong-gu, Daejeon, 34141, South Korea}
\email{bhim@kaist.ac.kr}
\thanks{Bo-Hae Im was supported by Basic Science Research Program through the National Research Foundation of Korea(NRF) grant funded by the Korea government(MSIT)(NRF-2023R1A2C1002385, or RS-2023-NR076333).}

\author{Beomho Kim}
\address{Department of Mathematical Sciences, KAIST, 291 Daehak-ro, Yuseong-gu, Daejeon, 34141, South Korea}
\email{bhkim@kaist.ac.kr}

\date{\today}
\subjclass[2010]{Primary 11G05}
\keywords{Larsen's conjecture, Elliptic curves, Function fields, Ring class fields}

\maketitle

\begin{abstract}
    Let $E/k$ be a non-isotrivial elliptic curve over a global function field $k$ of characteristic $p>3$, and  $G\subset \Gal(k^{\mathrm{sep}}/k)$ be a topologically finitely generated subgroup. We prove that if $E/k$ has analytic rank $1$, then its rank over the fixed subfield $L^G$ is infinite, where $L$ is the infinite ring class extension of some finite separable extension $K/k$. If $E/k$ has analytic rank $0$, then we prove that the same holds provided there exists an imaginary quadratic extension $K/k$ such that $E/K$ has analytic rank $1$ and satisfies the Heegner hypothesis.
\end{abstract}

\section{Introduction}\label{sec_intro}
Elliptic curves are central objects in Diophantine geometry, which arise naturally from the study of Diophantine equations. A key feature is the group of rational points on an elliptic curve, which encodes deep information about its underlying arithmetic structure. According to the Mordell-Weil theorem (\cite[Theorem~VIII.6.7]{Sil09}, \cite[Theorem~III.6.1]{Sil94}), these points over a global field form a finitely generated abelian group, and thus have a finite rank.

The complexity increases when examining rank behavior across field extensions. While the rank stays finite over finite extensions of global fields, infinite extensions present a much more intricate picture where the rank can remain either finite or grow to infinity.  Understanding how and why this happens remains a significant open challenge in the study of elliptic curves.

Let $E/k$ be an elliptic curve over a global field $k$ and $K/k$ be an infinite field extension. Then, the rank of $E(K)$ is finite in cases such as when:
\begin{itemize}
    \item $k=\bbq$ and $K/k$ is a cyclotomic $\bbz_p$-extension (\cite{Maz72}, \cite{Kat04}),
\end{itemize}
while the rank of $E(K)$ is infinite in the following cases:
\begin{itemize}
    \item $k$ is a global field and $K$ is the separable closure of $k$ (\cite{FJ74}), 
    \item $k$ is a number field and $K$ is a maximal abelian extension of $k$ (\cite{FJ74}), 
    \item $k$ is an imaginary quadratic number field and $K/k$ is an anticyclotomic $\bbz_p$-extension~(\cite{CV07}), 
    \item $k$ is an imaginary quadratic extension of a global function field and $K/k$ is an anticyclotomic $\bbz_p$-extension (\cite{Bre04}).
\end{itemize}

This last result by Breuer \cite{Bre04} serves as the direct motivation for the present work. Breuer's construction in \cite{Bre04} is carried out in the ring class field tower, where he uses associated Heegner points to prove that they generate a subgroup of infinite rank. 
This raises a natural and highly non-trivial question: if we shrink this field -- specifically, by passing down to a smaller subfield fixed by a subgroup of the Galois group -- does this infinitude of rank persist, or does it collapse to a finite value?

A prominent conjecture concerning this phenomenon of rank survival over fixed subfields was formulated by M.~Larsen~\cite{Lar03}:
\begin{conj}\label{Larsen}
    Let $A/k$ be an abelian variety over a finitely generated infinite field $k$. Let $G$ be a topologically finitely generated subgroup of $G_k := \Gal(k^{\mathrm{sep}}/k)$. Then the rank of $A$ over the fixed subfield $(k^{\mathrm{sep}})^G$ of $k^{\mathrm{sep}}$ under $G$ is infinite.
\end{conj}

This conjecture has been approached through  various methods. 
First, employing techniques from Diophantine geometry, the second author and M.~Larsen \cite{IL08} proved Conjecture~\ref{Larsen} for procyclic groups $G$. 

The arithmetic approach via Heegner points has also yielded several partial results toward Conjecture~\ref{Larsen}. In \cite{Im07}, the second author used the Heegner point method vertically to prove the conjecture when $A$ is an elliptic curve, $k=\bbq$, and $G$ is procyclic. This method was generalized in \cite{BI08} to cases where $k$ is a totally real number field or a global function field. Recently, the first and second authors applied the Heegner point method horizontally to prove Conjecture~\ref{Larsen} when $A$ is an elliptic curve of analytic rank at most one over $k=\bbq$. 

Another approach was given by Tim and Vladimir Dokchitser~\cite{DD09}, which proves Conjecture~\ref{Larsen} when $A$ is an elliptic curve and $k=\bbq$, assuming either the rank part of the Birch and Swinnerton-Dyer conjecture or the finiteness of the Tate-Shafarevich group over arbitrary number fields. 

Finally, the second author and M.~Larsen \cite{IL13} introduced a combinatorial method to prove Conjecture~\ref{Larsen} for elliptic curves with full rational $2$-torsion. Recently, they further generalized this approach~\cite{IL26} to all elliptic curves over fields of characteristic zero unconditionally.

In this paper, we bridge Breuer's results and Larsen's Conjecture by proving Conjecture~\ref{Larsen} when $A=E$ is a non-isotrivial elliptic curve over a global function field $k$ of characteristic $p>3$, assuming that $E/k$ has analytic rank~$1$. More precisely, while Breuer~\cite{Bre04} established that the rank is infinite over the tower of ring class fields, we prove that the rank remains infinite even when we descend to the tower of the smaller fixed subfields of them under $G$.
We are now ready to state our first main result:

\begin{theorem}\label{thm:main}
    Let $E/k$ be a non-isotrivial elliptic curve over a global function field $k$ of characteristic $p>3$. Suppose the analytic rank of $E/k$ is $1$. Let $G$ be a topologically finitely generated subgroup of $G_k := \Gal(k^{\mathrm{sep}}/k)$. Then the rank of $E$ over $L^G$ is infinite, where $L$ is the union of all ring class extensions $K[\mathfrak{p}^n]$ for some finite separable extension $K/k$ and prime ideal $\mathfrak{p}$.
\end{theorem}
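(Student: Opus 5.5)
Because $E/k$ has analytic rank $1$, the plan is to pick an imaginary quadratic extension $K/k$ for which Drinfeld--Heegner points exist and feed them into a Galois-theoretic argument in the style of the horizontal Heegner method used over $\bbq$. Since $E$ is non-isotrivial and $p>3$, $E$ is modular by a Drinfeld modular curve $X_0(\mathfrak{n})$, where $\mathfrak{n}$ is the conductor and $\infty$ is a place of split multiplicative reduction. Using the non-vanishing results for quadratic twists over function fields (the analogue of Waldspurger and Bump--Friedberg--Hoffstein / Murty--Murty), I will choose $K$ such that:
\begin{itemize}
\item $\infty$ does not split in $K$;
\item every prime dividing $\mathfrak{n}$ splits in $K$ (Heegner hypothesis);
\item the twist $E^K$ has analytic rank $0$.
\end{itemize}
Then $E/K$ has analytic rank $1$. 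By the Gross--Zagier formula of R\"uck--Tipp, the Heegner point $y_K\in E(K)$ has infinite order. Next I fix a prime $\mathfrak{p}$ of $k$ not dividing $\mathfrak{n}$. Following Breuer \cite{Bre04}, I get Heegner points $y_n\in E(K[\mathfrak{p}^n])$ satisfying the Euler-system norm relations $\Tr_{K[\mathfrak{p}^{n+1}]/K[\mathfrak{p}^n]} y_{n+1}=a_{\mathfrak{p}}y_n-y_{n-1}$. Together with the non-torsionness of $y_K$, these relations show that the $y_n$ generate a subgroup of infinite rank in $E(L)$.

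To descend to $L^G$, I pass to the finite layers. Put $L_n=K[\mathfrak{p}^n]$ and $G_n=$ the image of $G$ in $\Gal(L_n/k)$. The fixed field $L^G$ contains $L_n^{G_n}$, and since $G$ is topologically generated by $r$ elements, each $G_n$ is generated by at most $r$ elements. The extension $L_n/K$ is abelian, and $L_n/k$ is generalized dihedral over $\Gal(L_n/K)\cong \mathrm{Pic}(\cO_{\mathfrak{p}^n})$, which has a large $\bbz_p$-part. So $G_n\cap\Gal(L_n/K)$ is generated by at most $2r$ elements, and its index in $\Gal(L_n/K)$ tends to infinity, because the $\mathfrak{p}$-power part is a product of infinitely many $\bbz_p$'s (the local units $(1+\mathfrak{p}\cO_{\mathfrak{p}})$ in characteristic $p$ are not topologically finitely generated). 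The candidate points in $E(L^G)$ are the traces $z_{n,\chi}=\sum_{\sigma\in H_n}\sigma(y_n)$, with $H_n=G_n\cap \Gal(L_n/K)$, suitably symmetrised under complex conjugation if $G_n\not\subset\Gal(L_n/K)$.

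The key step, and the main obstacle, is proving that these traces are non-torsion and span infinite rank as $n$ varies. I will attack it with characters: $E(L_n)\otimes\bbc$ decomposes into $\chi$-isotypic pieces over characters $\chi$ of $\Gal(L_n/K)$, and the trace over $H_n$ projects onto the characters trivial on $H_n$. Since $H_n$ has small rank and large index, there are many primitive characters of conductor $\mathfrak{p}^n$ trivial on $H_n$. For each such $\chi$, the $\chi$-component of $y_n$ is nonzero: by Gross--Zagier with characters (R\"uck--Tipp, generalized by Zhang/Yun--Zhang style results), this reduces to $L'(E/K,\chi,1)\neq0$. This non-vanishing holds for all but finitely many $\chi$ by the function-field analogue of Vatsal/Cornut (Breuer's Mazur conjecture in the function-field setting, proved via Ratner-type equidistribution). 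Hence $\dim E(L_n^{G_n})\otimes\bbc$ is at least the number of such characters, minus a bounded error, and this tends to infinity.

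The delicate point is ensuring that infinitely many characters trivial on $H_n$ avoid the finite exceptional set. If this fails, my fallback is to use that the exceptions are confined to finitely many "levels", and to count characters of exact conductor $\mathfrak{p}^n$ trivial on $H_n$ for large $n$.
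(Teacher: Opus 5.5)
Your setup (Heegner points $y_n\in E(K[\mathfrak p^n])$, traces $z_n$ over the image $H_n$ of $G$, boundedly many generators, unbounded index) matches the paper's. However, the step you flag as the main obstacle is a genuine gap.

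To make the $z_n$ non-torsion and of unbounded rank, you need, for infinitely many $n$, characters $\chi$ of $\Gal(K[\mathfrak p^n]/K)$ that are trivial on $H_n$ and satisfy $L'(E/K,\chi,1)\neq 0$. You take this from a claimed non-vanishing for all but finitely many ring class characters of $\mathfrak p$-power conductor, attributed to Breuer. That is not what Breuer's theorem [Bre04] gives. As the paper recalls, it only shows that the $y_n$ generate a subgroup of infinite rank in $E(K[\mathfrak p^\infty])$, the analogue of Cornut's theorem. Knowing that infinitely many $\chi$ have nonzero component gives no control over whether any of them is trivial on $H_n$. In a $\bbz_p^\infty$-tower, an ``all but finitely many characters'' statement does not follow from infinite rank.

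What your argument really needs is a function-field Cornut--Vatsal theorem for derivatives, uniform over all primitive characters of conductor $\mathfrak p^n$ with $n\gg 0$. You neither prove nor cite this. Your fallback (``exceptions confined to finitely many levels'') restates that input rather than replacing it.

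Two secondary points:
\begin{enumerate}
\item The paper stresses that only Ulmer's non-vanishing theorem is available in the function-field setting. It yields $K$ quadratic over a finite separable $F/k$, not over $k$. The statement allows this, but then the generalized dihedral structure over $k$ that your symmetrisation uses is not available as written.
\item R\"uck--Tipp covers only $k=\mathbb{F}_q(T)$; in general you need Qiu's formula.
\end{enumerate}

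The paper avoids characters entirely. Iterating the norm relation gives $\Tr_{K[\mathfrak p^n]/K[1]}(y_n)=A_ny_0$ with $A_{n+1}=a_{\mathfrak p}A_n-q_{\mathfrak p}A_{n-1}$. Hence $\Tr_{K[\mathfrak p^n]/k}(y_n)=A_nP_k$ with $|A_n|\le (n+1)q_{\mathfrak p}^{n/2}$. Here $P_k=\Tr_{K/k}(P_K)$ is non-torsion because $E/k$ has analytic rank $1$, so $E(k)$ and $E(K)$ both have rank $1$. Your argument uses this hypothesis only to select $K$.

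In characteristic $p$, the group $(1+\mathfrak p)/(1+\mathfrak p^n)$ has order $q_{\mathfrak p}^{n-1}$ but exponent at most $p^{\lceil\log_p n\rceil}$. So the image of the $r$-generated open subgroup $G\cap G_{K[1]}$ has $p$-part of order at most $p^{r\lceil\log_p n\rceil}$. This gives $v_p\bigl([K[\mathfrak p^n]^G:K[\mathfrak p^m]^G]\bigr)\ge f(n-m)-O(\log n)$, where $q_{\mathfrak p}=p^f$.

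Suppose all $z_n$ lay in a fixed $K[\mathfrak p^m]^G$. Transitivity of the trace would then make this index divide $A_n c$ for a fixed integer $c\neq 0$. No two consecutive $A_n$ vanish, and $p^{(1-\varepsilon)fn}$ outgrows $(n+1)p^{fn/2}$, so this fails for large $n$ with $A_n\neq 0$. Hence the $z_n$ generate a non-finitely generated subgroup, and Lemma 3.3 of [BI08] gives infinite rank.

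This comparison of $p$-adic size (the degree) against archimedean size (the trace coefficient $A_n$) is the idea missing from your proposal. It needs no non-vanishing input beyond that of the single point $P_k$.
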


If $E/k$ has analytic rank $0$, the same conclusion is not known in general. Nevertheless, it follows under an additional assumption:

\begin{theorem}\label{analytic_rank_0}
    Let $E/k$ be a non-isotrivial elliptic curve over a global function field $k$ of characteristic $p>3$. Suppose the analytic rank of $E/k$ is $0$. Let $G$ be a topologically finitely generated subgroup of $G_k := \Gal(k^{\mathrm{sep}}/k)$. Assume, moreover, that there exists an imaginary quadratic extension $K/k$ such that $E/K$ has analytic rank 1 and $E/K$ satisfies the Heegner hypothesis. Then the rank of $E$ over $L^G$ is infinite, where $L$ is the union of all ring class extensions $K[\mathfrak{p}^n]$ for some prime ideal $\mathfrak{p}$.
\end{theorem}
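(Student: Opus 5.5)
The plan is to run the same horizontal Heegner-point argument that proves Theorem~\ref{thm:main}, now starting from the imaginary quadratic $K/k$ supplied by the hypothesis rather than one constructed from the analytic rank $1$ condition. Since $E/K$ satisfies the Heegner hypothesis, the Drinfeld modular parametrization $X_0(\mathfrak{n})\to E$ exists by non-isotriviality together with the Drinfeld--Deligne modularity. We fix a prime $\mathfrak{p}$ of $k$ inert (or at least not dividing $\mathfrak{n}$ and suitably unramified) in $K$, and build Heegner points $y_n\in E(K[\mathfrak{p}^n])$ for every $n$. By Gross--Zagier for function fields (R\"uck--Tipp, Ulmer), the hypothesis that $E/K$ has analytic rank $1$ makes the basic Heegner point $y_0$, after taking its trace down to $K$, non-torsion. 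Breuer's argument \cite{Bre04} then shows that the $y_n$ generate a subgroup of $E(L)$ of infinite rank. More precisely, using the norm relations $\Tr_{K[\mathfrak{p}^{n+1}]/K[\mathfrak{p}^n]}y_{n+1}=a_{\mathfrak{p}}y_n-\ldots$ and the non-triviality at level $0$, one shows that for each $n$ the $\Gal(K[\mathfrak{p}^n]/K)$-module spanned by $y_n$ contains non-trivial isotypic pieces for characters of unbounded conductor.

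The next step is to descend to $L^G$. Write $\Gal(L/k)$ as the generalized dihedral group $\Gal(L/K)\rtimes\bbz/2$, where $\Gal(L/K)$ is an extension of a finite class group by a pro-$p$ times finite abelian group. The image $\bar G$ of $G$ in $\Gal(L/k)$ is topologically finitely generated. Consequently $\bar G\cap\Gal(L/K)$ is a finitely generated closed subgroup of an abelian group $\cong(\text{finite})\times\bbz_p^{\infty}$, since in characteristic $p$ the $\mathfrak{p}$-adic unit group is $\bbz_p^{\aleph_0}$ up to finite index. Such a subgroup has infinite index, and indeed the quotient by it remains infinitely generated. The plan is to choose infinitely many characters $\chi$ of $\Gal(L/K)$ that are trivial on $\bar G\cap\Gal(L/K)$ and stable under the dihedral involution, with each $\chi$ appearing in $E(L)\otimes\bbc$ via the Heegner points $y_n$. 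Taking $e_\chi$-components of the $y_n$ and averaging over $\bar G$ then produces points of $E(L^G)$ spanning independent isotypic components, which yields infinite rank.

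The main obstacle is the nonvanishing step. We must show that for infinitely many $\chi$ trivial on $\bar G$, the projection $e_\chi y_n$ is nonzero. This requires more than the infinitude of rank: it needs control over which characters appear. I would first attempt this through the Euler-system-free argument of the earlier horizontal paper. A character $\chi$ of conductor $\mathfrak{p}^n$ appears unless the twisted trace vanishes. Distribution relations express $\sum_\chi e_\chi y_n$ in terms of $y_n$, and the non-torsion-ness of $y_0$ (from analytic rank $1$) propagates upward, because the $\mathfrak{p}$-adic trace relation combined with Ihara-type surjectivity prevents all primitive components at a given level from vanishing simultaneously. If this fails, the fallback is to invoke the function-field Cornut--Vatsal type nonvanishing (Breuer's equidistribution of supersingular reduction), which gives $e_\chi y_n\ne0$ for all but finitely many $\chi$ of each large conductor. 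Since the characters trivial on $\bar G$ form an infinite set whose conductors grow, this would suffice.

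Finally, we note that the analytic rank $0$ hypothesis on $E/k$ is used only to guarantee the sign consistency $\epsilon(E/K)=-1$, which is automatic from the assumption on $E/K$. Hence the rest of the argument is verbatim the proof of Theorem~\ref{thm:main} with this fixed $K$.
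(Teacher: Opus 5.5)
There is a genuine gap, and it sits exactly at the step you call the main obstacle. Your descent needs infinitely many characters $\chi$ of $\Gal(L/K)$, trivial on $\bar H=\bar G\cap\Gal(L/K)$, with $e_\chi y_n\neq 0$. Neither of your routes supplies this.

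\begin{itemize}
\item Route (a) at best shows that the primitive components at a given level do not all vanish. That says nothing about the thin subfamily of characters trivial on $\bar H$; at level $\mathfrak p^n$ this may be only about an $n^{-r}$ proportion of all characters.
\item Route (b), read literally (\emph{all but finitely many $\chi$ of each large conductor}), is vacuous, since each conductor carries only finitely many characters. What you would actually need is a uniform Cornut--Vatsal statement, e.g.\ $e_\chi y_n\neq 0$ for every $\chi$ of exact conductor $\mathfrak p^n$ with $n\gg 0$. This would have to hold for a $\bbz_p^\infty$-tower over an arbitrary global function field, and you neither prove it nor reduce it to anything established here.
\end{itemize}

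Your closing paragraph is also wrong. The proof of Theorem~\ref{thm:main} does not go through verbatim, because it uses that $P_k=\Tr_{K/k}(P_K)$ is non-torsion in $E(k)$ (Lemma~\ref{same_rank}). Here $E(k)$ is finite, so $P_k$ is torsion. The analytic rank $0$ hypothesis is exactly what breaks that argument; it is not merely a sign condition.

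The missing idea is that no character-by-character nonvanishing is needed. The paper works over $K$ instead of $k$. Set $H=G\cap G_K$ and $z_n=\Tr_{K[\mathfrak p^n]/K[\mathfrak p^n]^H}(y_n)$. Then $\Tr_{K[\mathfrak p^n]^H/K}(z_n)=A_nP_K$, where $P_K$ is non-torsion in the rank-one group $E(K)$ and $|A_n|\ll q_{\mathfrak p}^{n/2}$ by Lemma~\ref{lem:vertical_recurrence}. Suppose all $z_n$ lay in a fixed $K[\mathfrak p^m]$. Then this trace would be divisible by $[K[\mathfrak p^n]^H:K[\mathfrak p^m]^H]$, whose $p$-adic valuation grows like $(1-\varepsilon)fn$. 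The reason is that the $p$-part of the Galois group has order about $q_{\mathfrak p}^{n}$ but exponent only $p^{O(\log_p n)}$, so the image of the finitely generated group $H$ is tiny (Lemma~\ref{p-part_size}). This contradiction gives infinite rank over $L^H$, using only that $P_K$ is non-torsion.

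Only after that does one descend from $H$ to $G$ through the dihedral structure, and your averaging handles this step correctly, with one correction. You must use pairs $\chi\neq\chi^{-1}$, not characters \emph{stable under the dihedral involution}. Read literally, only finitely many characters satisfy $\chi=\chi^{-1}$, since $p$ is odd and the torsion of $\Gal(L/K)$ is finite. With that fix the argument runs as follows. Infinite rank of $E(L^H)$ forces infinitely many $\bar H$-trivial $\chi$ with $V_\chi\neq 0$, and all but finitely many of them satisfy $\chi\neq\chi^{-1}$. For each such $\chi$ and each $0\neq v\in V_\chi$, the vector $v+\sigma v\in V_\chi\oplus V_{\chi^{-1}}$ is a nonzero $G$-invariant, and distinct pairs give independent vectors.
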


Our work builds on the methods developed in earlier works~\cite{CI26} and~\cite{Im07}, which treat the number field setting. In contrast, we work over global function fields, where the structure of the ring class field tower exhibits fundamentally different behavior.

Let $K$ be an imaginary quadratic extension, and let $L$ denote the union of all ring class extensions $K[\mathfrak{p}^n]$ appearing in Theorem~\ref{thm:main}. By class field theory and the structure of local units (see \cite[Proposition~2.1]{Bre04}), the Galois group $\Gal(L/K)$ has the following structure:
\[
    \Gal(L/K)\cong
    \begin{cases}
        \{\text{finite group}\} \times \bbz_p, & \text{ if $k$ is a number field} \\
        \{\text{finite group}\} \times \bbz_p^\infty, & \text{ if $k$ is a function field}.
    \end{cases}
\]
Consequently, this structural difference introduces a notable shift in our argument. Since $\bbz_p$ is topologically generated by a single element, the first two authors used horizontal ring class field towers of the form $K[p_1\cdots p_n]$ in~\cite{CI26} where the $p_i$ are suitably chosen rational primes inert in $K$. In contrast, in the function field setting, the group $\bbz_p^\infty$ has infinitely many generators, which allows us to work directly with the vertical tower $K[\mathfrak{p}^n]$. As a result, our method avoids the auxiliary construction of infinite sequences of primes required in the number field case.

The second author previously used vertical ring class field towers in~\cite{Im07}, where Conjecture~\ref{Larsen} was established for elliptic curves when $G$ is procyclic, exploiting the dihedral structure of the tower without relying on the Gross–Zagier formula.
In the present work, we treat an arbitrary topologically finitely generated subgroup $G\subset G_k$, thereby obtaining a substantially more general result. Although our argument makes essential use of the Gross-Zagier-Zhang formula, the passage to general $G$ introduces additional technical complications. In particular, we obtain recursive relations not only for the Heegner points themselves but also for their traces (see Lemma~\ref{lem:vertical_recurrence}), which play a key role in our argument.

Another important difference lies in the existence of a suitable imaginary quadratic field $K$. In the number field case, classical results of \cite{BFH90} or \cite{MM91} guarantee the existence of an imaginary quadratic field $K/\bbq$ satisfying both the Heegner hypothesis and the non-vanishing of the relevant $L$-series. In the function field setting, however, only a weaker non-vanishing result is available due to Ulmer~\cite{Ulm05}. In particular, Ulmer's result provides such a quadratic extension only after passing to a finite separable extension $F$ of the base field (see Lemma~\ref{lem:cond_GZ}). To address this issue, in the analytic rank zero case, we impose the additional assumption that this auxiliary field $F$ coincides with the base field $k$.

\

The structure of this paper is as follows: in Section~\ref{sec_pre}, we introduce the foundational definitions of the $L$-functions associated to elliptic curves over function fields and Heegner systems. We also establish lemmas concerning the norm compatibilities of Heegner points and trace relations, needed for our main results. We then prove Theorem~\ref{thm:main} in Section~\ref{sec_analytic_rank_1}, followed by the proof of Theorem~\ref{analytic_rank_0} in Section~\ref{sec_analytic_rank_0}.


\section{Preliminaries}\label{sec_pre}

\subsection{Elliptic curves over function fields}

For a prime $p>3$, let $\mathbb{F}_q$ be a finite field of order $q$, where $q$ is a power of $p$. Let $\mathcal{C}$ be a geometrically connected smooth projective curve over  $\mathbb{F}_q$ and let $k=\mathbb{F}_q(\mathcal{C})$ be the corresponding global function field.

We recall several standard notions regarding elliptic curves over $k$. Let $E/k$ be an elliptic curve over $k$. The curve $E/k$ is called \emph{constant} if it is defined over $\mathbb{F}_q$. We say that $E$ is \emph{isotrivial} if it becomes isomorphic to a constant curve over a finite extension of $k$; equivalently, if its $j$-invariant $j(E)\in \mathbb{F}_q$. Finally, $E$ is said to be \emph{non-isotrivial} if $j(E)\notin \mathbb{F}_q$.

By the Mordell-Weil theorem \cite[Theorem~III.6.1]{Sil94}, the abelian group $E(k)$ of $k$-rational points over $k$ is finitely generated. Consequently, we can write 
\[E(k) \cong E(k)_{tors} \oplus \bbz^r,\]
where $r$ is a non-negative integer, called the algebraic rank of $E/k$.

Let $k_\infty$ denote the completion of $k$ at $\infty$ and write $\bbc_\infty$ for the completion of an algebraic closure of $k_\infty$. We denote by $\cO_k$ the ring of functions in $k$ regular outside $\infty$. This ring is a Dedekind domain with a finite class number
\[
h=|\operatorname{Pic}(\cO_k)|=\deg(\infty)h_k,
\]
where $h_k=|\operatorname{Pic}^0(k)|$ denotes the class number of $k$.

A Drinfeld module over a field $L$ is an $\mathbb{F}_q$-algebra homomorphism $\Phi:\cO_k\to L\{\tau\}$ satisfying natural conditions, where $\tau=\operatorname{Fr}(\tau_q)$ denotes the Frobenius endomorphism with coefficient in $L$. A cyclic $\mathfrak{n}$-isogeny is a morphism between Drinfeld modules whose kernel is a cyclic $\cO_k$-module of order $\mathfrak{n}$.
The Drinfeld modular curve $X_0(\mathfrak{n})$ parametrizes isomorphism classes of pairs $(\Phi,\Phi')$ of rank-$2$ Drinfeld $\cO_k$-modules linked by a cyclic $\mathfrak{n}$-isogeny.

The conductor of $E$ is of the form $\mathfrak{n}\infty$, where $\mathfrak{n}\subseteq \cO_k$ is an ideal. By the work of Drinfeld, as asserted in \cite[Section 8]{GR96}, there exists a modular parametrization
\[
\pi: X_0(\mathfrak{n})\to E
\]
defined over $k$.

For a finite place $v$ of $k$, denote by $k_v$ its residue field and by  $q_v$ its cardinality. If $E/k$ has  good reduction at $v$, then we define 
\[a_v := q_v+1 - \lvert E_v(k_v) \rvert\]
and
\[L_v(E/k,T) := 1-a_vT+q_vT^2.\] 
If $E$ has bad reduction at $v$, we define $L_v(T)$ as follows:
\[
L_v(E/k,T) :=
\begin{cases}
    1-T, & \text{ if $E$ has split multiplicative reduction at $v$} \\
    1+T, & \text{ if $E$ has non-split multiplicative reduction at $v$} \\
    1, & \text{ if $E$ has additive reduction at $v$.}
\end{cases}
\]
The $L$-function of $E/k$ is defined by the Euler product
\begin{align*}
    L(E/k,s) &:= \prod_{v} L_v(E/k,q_v^{-s})^{-1} \\
    &= \prod_{v\nmid \mathfrak{n}} \left(1-a_vq_v^{-s}+q_v^{1-2s}\right)^{-1}\times \prod_{v\mid \mathfrak{n}} L_v(E/k,q_v^{-s})^{-1}.
\end{align*}
By the Hasse bound $|a_v|\leq 2\sqrt{q_v}$, this product converges absolutely for $\operatorname{Re}(s)>3/2$. It admits a meromorphic continuation to the entire complex plane and satisfies a functional equation relating $s$ and $2-s$.

The analytic rank of $E/k$ is defined as the order of vanishing of $L(E/k,s)$ at $s=1$. The Birch-Swinnerton-Dyer conjecture predicts that the algebraic rank of $E/k$ and the analytic rank of $E/k$ are equal:
\begin{equation}\label{BSD_equality}
    \mathrm{rank}_\bbz E(k) = \mathrm{ord}_{s=1} L(E/k,s).
\end{equation}
In general, the inequality 
\[\mathrm{rank}_\bbz E(k) \leq \mathrm{ord}_{s=1} L(E/k,s)\]
is known to be true, and the equality \eqref{BSD_equality} is known to hold when the analytic rank of $E/k$ is~$\leq 1$.

Let $\chi$ be a finite order Hecke character of $k$. The twisted $L$-function $L(E/k,\chi,s)$ is defined by modifying the local Euler factors at good places $v\nmid \mathfrak{n}$ as
\[
L_v(E/k,\chi,q_v^{-s})^{-1} = 1-a_v\chi(v)q_v^{-s}+\chi(v)^2q_v^{1-2s},
\]
and similarly at places of bad reduction.This Euler product converges in the same region as $L(E/k,s)$, admits a meromorphic continuation to the entire complex plane, and satisfies a function equation.

Let $K/k$ be a quadratic extension, and let $\eta$ be the associated quadratic Hecke character. We have the factorization
\[
L(E/K,s) = L(E/k,s)L(E/k,\eta,s).
\]
Consequently, the analytic rank over $K$ is the sum of the orders of vanishing at $s=1$ of the two factors. This will be used to compare the analytic rank over $k$ and over quadratic extensions.

\subsection{Heegner systems}
Let $E/k$ be a non-isotrivial elliptic curve over a function field $k$ with conductor $\mathfrak{n}\infty$ and let $K$ be an imaginary quadratic extension of $k$ (i.e., the place $\infty$ does not split in $K/k$) such that the pair $(E,K)$ satisfies the Heegner hypothesis, as recalled below:

\begin{namedtheorem}[Heegner hypothesis] \cite[(3.2.2)]{Bro25}
    All places dividing $\mathfrak{n}$ split completely in $K/k$.
\end{namedtheorem}

We let $\cO_K$ denote the integral closure of $\cO_k$ in $K$.
Let $\mathfrak{p}\subset \cO_k$ be a non-zero place not dividing $\mathfrak{n}$, and define the order of conductor $\mathfrak{p}^n$ in $\cO_K$ by
\[
\cO_n:=\cO_k + \mathfrak{p}^n\cO_K, \qquad n\geq 1.
\]
Let $K[\mathfrak{p}^n]$ denote the ring class field of $K$ of conductor $\mathfrak{p}^n$, and set
\[
K[\mathfrak p^\infty] := \bigcup_{n \ge 1} K[\mathfrak p^n].
\]

By the Heegner hypothesis, there exists an ideal $\mathfrak{R}\subseteq \cO_K$ such that $\cO_K/\mathfrak{R}\simeq \cO_k/\mathfrak{n}$. Setting $\mathfrak{R}_n:=\mathfrak{R}\cap \cO_n$, we obtain
\[
\cO_n/\mathfrak{R}_n\simeq \cO_k/\mathfrak{n}
\]
Thus, we may regard $\cO_n$ and $\mathfrak{R}_n^{-1}$ as rank-$2$ $\cO_k$-lattices in $\bbc_\infty$. This yields a pair of Drinfeld modules $(\Phi^{\cO_n},\Phi^{\mathfrak{R}_n^{-1}})$ linked by a cyclic $\mathfrak{n}$-isogeny. 
Hence, they define a \emph{Heegner point}~$x_n$ on $X_0(\mathfrak{n})$, which is defined over the ring class field $K[\mathfrak{p}^n]$.
Under the modular parametrization $\pi:X_0(\mathfrak{n})\to E$, we obtain
points $y_n:=y_{\mathfrak{p}^n}=\pi(x_{\mathfrak{p}^n})\in E(K[\mathfrak{p}^n])$.

To describe the Galois structure of the tower, we first recall a result of Breuer on the structure of the Galois group:
\begin{lemma} \cite[Proposition 2.1]{Bre04} \label{lem:Galois_group_decomposition}
    $G := \Gal(K[\mathfrak{p}^\infty]/K) \simeq \bbz_p^\infty \times G_0$, where $G_0:=G_{\operatorname{tors}}$ is a~finite group.
\end{lemma}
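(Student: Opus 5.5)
Via class field theory, $\Gal(K[\mathfrak{p}^\infty]/K)$ is the quotient of the idèle class group of $K$ cut out by the ring class field conditions. Concretely, there is an exact sequence
\[
1 \to \widehat{\cO}_K^\times \cdot K_\infty^\times / (\widehat{\cO}_k^\times\, K_\infty^\times \cdots) \to \Gal(K[\mathfrak{p}^\infty]/K) \to \operatorname{Pic}(\cO_K) \to 1 .
\]
More usefully, for each $n$ we have
\[
\Gal(K[\mathfrak{p}^n]/K[1]) \simeq (\cO_K\otimes \cO_{k,\mathfrak{p}})^\times / \bigl(\cO_{k,\mathfrak{p}}^\times\,(1+\mathfrak{p}^n(\cO_K\otimes\cO_{k,\mathfrak{p}}))\bigr)\cdot(\text{image of global units}).
\]
The global units are $\cO_K^\times$. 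Since $\infty$ does not split in $K$, this group is just $\mathbb{F}_{q'}^\times$, which is finite. I would therefore take the inverse limit and obtain
\[
1\to U \to \Gal(K[\mathfrak{p}^\infty]/K) \to \operatorname{Pic}(\cO_K)\to 1,
\]
where $U$ is the quotient of $(\cO_K\otimes\cO_{k,\mathfrak{p}})^\times/\cO_{k,\mathfrak{p}}^\times$ by a finite group.

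The main step is the structure of the local units. Write $R=\cO_K\otimes\cO_{k,\mathfrak{p}}$ and $A=\cO_{k,\mathfrak{p}}\simeq \mathbb{F}_{q_\mathfrak{p}}[[t]]$. Then $R^\times \simeq \bar R^\times\times(1+\mathfrak{m}_R)$, where the first factor is finite. The principal units $1+\mathfrak{p}R$ of a local (or semilocal) ring of equal characteristic $p$ are isomorphic, as a $\bbz_p$-module, to $\bbz_p^{\mathbb{N}}$; this follows from the classical structure of $1+t\mathbb{F}[[t]]$ via Artin–Hasse/Witt decomposition. So $(1+\mathfrak{p}R)/(1+\mathfrak{p}A)$ is a quotient of a countable product of $\bbz_p$'s. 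I would argue it is torsion-free: if $u^{p}\in 1+\mathfrak{p}A$, then taking $p$-th roots in $R$ (injective Frobenius) puts $u$ in $A$, using that $R/A$ is separable and Frobenius-compatible. It is also topologically free, because it is a pro-$p$ abelian group that is torsion-free with infinitely generated Frattini quotient. Hence it is $\simeq \bbz_p^\infty$, meaning $\bbz_p^{\mathbb{N}}$, since a torsion-free abelian pro-$p$ group is free pro-$p$ abelian.

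I would assemble the pieces as follows. The whole group is an extension of finite groups by a subgroup of finite index isomorphic to $\bbz_p^\infty$. Abelian profinite groups split as the product of their pro-$p$ and prime-to-$p$ parts. The pro-$p$ part $P$ is then an abelian pro-$p$ group containing $\bbz_p^\infty$ with finite index. Its torsion $P_{\rm tors}$ is finite, because it injects into the finite quotient. Moreover $P/P_{\rm tors}$ is torsion-free, hence free, so $P\simeq P_{\rm tors}\times\bbz_p^\infty$ by projectivity of free pro-$p$ abelian groups. Combining this with the prime-to-$p$ finite part gives $G\simeq \bbz_p^\infty\times G_0$ with $G_0=G_{\rm tors}$ finite.

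The main obstacle is the torsion-freeness and countable-rank computation for $(1+\mathfrak{p}R)/(1+\mathfrak{p}A)$. The case analysis is inert, split, or ramified $\mathfrak{p}$, and the ramified case needs care with Frobenius. I would first try the uniform argument above: reduce to fields of fractions, use $K_\mathfrak{p}\cap k_\mathfrak{p}^{1/p}=k_\mathfrak{p}$, which holds by separability.
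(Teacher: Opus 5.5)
The paper gives no proof of this lemma beyond citing Breuer. Your argument is the standard one behind that citation, and it is correct. Class field theory reduces the question to $R^\times/(\cO_{k,\mathfrak p}^\times\cdot\cO_K^\times)$, which is the same input the paper later quotes from Brown in the proof of Lemma~\ref{p-part_size}. Torsion-freeness of $(1+\mathfrak pR)/(1+\mathfrak pA)$ follows from separability of $K_{\mathfrak p}/k_{\mathfrak p}$ exactly as you say. Your assembly via the $p$/prime-to-$p$ splitting, finiteness of $P_{\rm tors}$, and projectivity of $\bbz_p^{\mathbb N}$ is also sound.

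The one claim you assert rather than prove is that this torsion-free quotient has infinite rank (your ``infinitely generated Frattini quotient''). That is precisely what distinguishes $\bbz_p^\infty$ from the number-field answer $\bbz_p$, so it deserves an argument.

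In the split case it is immediate, since the quotient is $\cong 1+\mathfrak pA$. In general a count settles it:
\begin{itemize}
\item the level-$n$ quotient $(1+\mathfrak pR)/\bigl((1+\mathfrak pA)(1+\mathfrak p^nR)\bigr)$ has order $q_{\mathfrak p}^{2(n-1)}/q_{\mathfrak p}^{n-1}=q_{\mathfrak p}^{n-1}$;
\item its exponent is at most $p^{\lceil\log_p n\rceil}$, because $(1+x)^{p^m}=1+x^{p^m}$;
\item a quotient of $\bbz_p^d$ with this exponent has order at most $p^{d\lceil\log_p n\rceil}$, which eventually falls below $q_{\mathfrak p}^{n-1}$ for any fixed $d$.
\end{itemize}
So the quotient is not topologically finitely generated, hence is $\bbz_p^{\mathbb N}$.
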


Since we are mainly interested in the $p$-power part of the tower, we may decompose
\[
\mathrm{Gal}(K[\mathfrak p^n]/K)
\simeq
(\text{$p$-primary part}) \times (\text{prime-to-$p$ part}),
\]
and restrict our attention to the Sylow $p$-subgroup. In particular, the Heegner system naturally lives over the pro-$p$ extension $K[\mathfrak p^\infty]$.
By class field theory, we have
\begin{equation} \label{eq:Galois_class field tower}
    \Gal(K[\mathfrak{p}^n]/K)\simeq \operatorname{Pic}(\cO_n).
\end{equation}

The collection $\{y_n\}_{n\geq 1}$ satisfying certain conditions forms a Heegner system attached to the pair $(E,K)$. For details, see \cite[Section 6.5]{Bro25}. Under the Heegner hypothesis, the existence of a non-trivial Heegner system $\{y_n\}$ follows from \cite[Section 4.2]{BI08} and the following lemma.

\begin{lemma}[norm-compatibilities] \cite[Table 3.4.8]{Bro25} \label{norm-compatibility}
    Let $\mathfrak{p}\subset \cO_k$ be a non-zero place coprime to~$\mathfrak{n}$.
    Then, the Heegner system $\{y_n\}_{n\geq 1}$ attached to $(E,K)$ satisfies the following trace relations:
    \begin{equation}
        \tr_{K[\mathfrak{p}]/K[1]}(y_1) = a_{\mathfrak{p}}y_0
    \end{equation}
    and
    \begin{equation} \label{eq:second norm-compatibility}
        \tr_{K[\mathfrak{p}^{n+1}]/K[\mathfrak{p}^n]}(y_{n+1}) = a_{\mathfrak{p}}y_n - y_{n-1},
    \end{equation}
    where $a_{\mathfrak{p}}=\tr(\operatorname{Fr}_{\mathfrak{p}}\mid T_\ell(E))$ is the trace of Frobenius acting on the $\ell$-adic Tate module $T_\ell(E)$ for a prime $\ell\neq \operatorname{char}(k)$. 
\end{lemma}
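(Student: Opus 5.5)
The plan is to derive both trace relations from the action of the Hecke correspondence $T_{\mathfrak{p}}$ on the Drinfeld modular curve $X_0(\mathfrak{n})$. There are two inputs. The first is an explicit moduli description of $T_{\mathfrak{p}}(x_n)$ in terms of lattices. The second is the fact that the modular parametrization $\pi:X_0(\mathfrak{n})\to E$ intertwines $T_{\mathfrak{p}}$ with multiplication by $a_{\mathfrak{p}}$, because $E$ corresponds to a Hecke eigenform (Drinfeld, as recalled from \cite{GR96}) and $\mathfrak{p}\nmid\mathfrak{n}$. Applying $\pi$ to the divisor $T_{\mathfrak{p}}(x_n)$ then gives $a_{\mathfrak{p}}y_n$ on one side, and the point is to identify the other side as a sum of Galois conjugates of $y_{n+1}$ plus a lower-level term.

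\emph{Step 1 (lattice computation).} The Heegner point $x_n$ is the pair $(\cO_n,\mathfrak{R}_n^{-1})$. The correspondence $T_{\mathfrak{p}}$ sends it to the formal sum of the $q_{\mathfrak{p}}+1$ pairs $(\Lambda,\Lambda')$, where $\Lambda\supset\cO_n$ has index $\cO_k/\mathfrak{p}$ and $\Lambda'$ is the corresponding lattice for the $\mathfrak{n}$-part. Since $\mathfrak{p}$ is coprime to $\mathfrak{n}$, the $\mathfrak{n}$-structure is carried along unchanged. For each such $\Lambda$ I compute its multiplier ring $\{\alpha\in K:\alpha\Lambda\subseteq\Lambda\}$.
\begin{itemize}
\item For $n\ge1$, exactly one $\Lambda$ has multiplier ring $\cO_{n-1}$; it is the lattice $\mathfrak{p}^{-1}\cO_{n-1}$ up to scaling, so it gives $x_{n-1}$.
\item The remaining $q_{\mathfrak{p}}$ lattices have multiplier ring $\cO_{n+1}$, and they are proper ideals of $\cO_{n+1}$ lying in the classes of $\ker(\operatorname{Pic}(\cO_{n+1})\to\operatorname{Pic}(\cO_n))$.
\end{itemize}

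\emph{Step 2 (Galois action).} Using \eqref{eq:Galois_class field tower} and the Shimura reciprocity law for Drinfeld modules with CM by orders, I identify these $q_{\mathfrak{p}}$ points with the conjugates $\sigma(x_{n+1})$ for $\sigma\in\Gal(K[\mathfrak{p}^{n+1}]/K[\mathfrak{p}^n])$. This requires checking that the kernel of $\operatorname{Pic}(\cO_{n+1})\to\operatorname{Pic}(\cO_n)$ has order exactly $q_{\mathfrak{p}}$ for $n\ge1$. That follows from the exact sequence
\[
1\to \cO_n^\times/\cO_{n+1}^\times\to(\cO_n\otimes\cO_{k,\mathfrak{p}})^\times/(\cO_{n+1}\otimes\cO_{k,\mathfrak{p}})^\times\to\operatorname{Pic}(\cO_{n+1})\to\operatorname{Pic}(\cO_n)\to1 .
\]
Here the middle quotient has order $q_{\mathfrak{p}}$, and the global unit quotient is trivial since $\cO_n^\times=\cO_k^\times=\mathbb{F}_q^\times$ for $n\ge1$. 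Combining Steps 1 and 2 with Step 0, pushing the divisor identity forward along $\pi$ gives \eqref{eq:second norm-compatibility}.

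\emph{Step 3 (base case $n=0$).} I run the same computation with $\cO_0=\cO_K$. Now all $q_{\mathfrak{p}}+1$ overlattices of $\cO_K$ of index $\mathfrak{p}$ are considered, and the count depends on the splitting of $\mathfrak{p}$ in $K$.
\begin{itemize}
\item If $\mathfrak{p}$ is inert, none of them is $\cO_K$-stable. They all have multiplier ring $\cO_1$ and form the full $\Gal(K[\mathfrak{p}]/K[1])$-orbit of $x_1$, up to the unit index $[\cO_K^\times:\cO_1^\times]$. This gives $\tr_{K[\mathfrak{p}]/K[1]}(y_1)=a_{\mathfrak{p}}y_0$.
\item In the split or ramified cases, the $\cO_K$-stable overlattices $\mathfrak{P}^{-1}$ contribute extra terms of the form $\operatorname{Frob}_{\mathfrak{P}}(y_0)$.
\end{itemize}

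The main obstacle I expect is this base case and the bookkeeping it requires. The orbit count involves both the unit index $[\cO_K^\times:\cO_1^\times]$, which is nontrivial only when $\cO_K^\times\neq\mathbb{F}_q^\times$ (constant-field extension cases), and the splitting behaviour of $\mathfrak{p}$. To land on the clean form stated in the lemma, I would either restrict to $\mathfrak{p}$ inert in $K$ (which is the natural choice for the vertical tower, as in \cite{Im07} and \cite{Bre04}), or absorb the Frobenius terms by following \cite[Table 3.4.8]{Bro25}, where exactly these cases are tabulated.
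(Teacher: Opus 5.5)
The paper gives no argument for this lemma beyond citing \cite[Table 3.4.8]{Bro25}. Your route is the standard derivation behind such tables: the Hecke correspondence $T_{\mathfrak{p}}$ on $X_0(\mathfrak{n})$ together with a count of index-$\mathfrak{p}$ lattices. Your Picard and unit computations are correct, including the exact sequence and $\cO_n^\times=\mathbb{F}_q^\times$ for $n\ge 1$. Two points still need repair.

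\emph{The base case cannot be proved as stated, and the ``absorb'' option does not exist.} Your own count gives, for every $\mathfrak{p}\nmid\mathfrak{n}$,
\[
u\,\tr_{K[\mathfrak{p}]/K[1]}(y_1)=a_{\mathfrak{p}}y_0-\sum_{N_{K/k}\mathfrak{P}=\mathfrak{p}}\sigma_{\mathfrak{P}}(y_0),\qquad u=[\cO_K^\times:\cO_1^\times].
\]
Take $\mathfrak{p}$ split and $u=1$, and apply $\tr_{K[1]/K}$. You get $\tr_{K[\mathfrak{p}]/K}(y_1)=(a_{\mathfrak{p}}-2)P_K$, whereas the stated identity gives $a_{\mathfrak{p}}P_K$. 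These differ as soon as $P_K$ has infinite order, which is exactly the situation in which the lemma is applied. So you must commit to your first option. Prove the lemma under the extra hypothesis that $\mathfrak{p}$ is inert in $K$, and keep the factor $u$ on the left when $\cO_K^\times\neq\mathbb{F}_q^\times$. Chebotarev gives infinitely many such $\mathfrak{p}\nmid\mathfrak{n}$, so nothing is lost downstream. The relation for $n\ge 1$ needs no such hypothesis: there, neither the lattice count nor the unit index depends on how $\mathfrak{p}$ splits.

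\emph{Steps 1--2 tacitly treat $\mathfrak{p}$ as principal.} This fails in general, because $|\operatorname{Pic}(\cO_k)|=\deg(\infty)h_k$ can exceed $1$.
The unique index-$\mathfrak{p}$ superlattice of $\cO_n$ with multiplier ring $\cO_{n-1}$ is $\cO_{n-1}$ itself. The lattice $\mathfrak{p}^{-1}\cO_{n-1}$ contains $\cO_n$ with index $\mathfrak{p}^3$ and need not be homothetic to $\cO_{n-1}$.
The other $q_{\mathfrak{p}}$ superlattices are $\mathfrak{p}^{-1}L$, with $L\subset\cO_n$ of index $\mathfrak{p}$ and $L\cO_n=\cO_n$. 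Their classes therefore lie in the fibre over $[\mathfrak{p}^{-1}\cO_n]$, not necessarily in $\ker(\operatorname{Pic}(\cO_{n+1})\to\operatorname{Pic}(\cO_n))$. If you use sublattices instead, the new ones lie in the kernel but the old one becomes $\mathfrak{p}\cO_{n-1}$. Either way, Shimura reciprocity alone yields the recursion twisted by an Artin symbol attached to $[\mathfrak{p}]\in\operatorname{Pic}(\cO_k)$.

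The missing input is the following. For ideals $\mathfrak{a}\subseteq\cO_k$, the automorphisms $(\Lambda,\Lambda')\mapsto(\mathfrak{a}\Lambda,\mathfrak{a}\Lambda')$ of $X_0(\mathfrak{n})$ realize the action of the centre. The centre acts on the $E$-isotypic part through the central character of $\pi_E$, which is trivial since $\det\rho_{E,\ell}$ is cyclotomic. Hence $\sigma_{\mathfrak{a}\cO_m}(y_m)=y_m$ up to constants coming from cusps (torsion), and this removes the twist.

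Relatedly, for $h>1$ the curve $X_0(\mathfrak{n})$ is geometrically disconnected and $T_{\mathfrak{p}}$ moves its components. So $\pi\circ T_{\mathfrak{p}}=a_{\mathfrak{p}}\pi$ at the level of points also holds only up to such constants. With these two repairs your argument proves the corrected lemma, at least modulo torsion.
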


\subsection{Function field analogue of Gross-Zagier-Zhang formula}
The N{\'e}ron-Tate height pairing provides a $\bbq$-linear non-degenerate pairing
\[
\langle \cdot, ~ \cdot \rangle_{\mathrm{NT}}: E(F^{\mathrm{sep}})_{\bbq}\times E(F^{\mathrm{sep}})_{\bbq} \to \bbc.
\]

Let
\[
M=\mathrm{End}(E)_{\bbq}:=\mathrm{End}(E)\otimes_{\bbz} \bbq.
\]
The field $M$ acts naturally on $E(F^{\mathrm{sep}})_{\bbq}$. The compatibility of the N{\'e}ron–Tate pairing with dual isogenies implies that
\[
\langle aP,Q\rangle_{\mathrm{NT}}
=
\langle P,\hat{a} Q\rangle_{\mathrm{NT}}
\qquad (a\in M),
\]
so the pairing is $M$-balanced. Consequently, it descends to
\[
\langle \cdot, ~ \cdot \rangle_{\mathrm{NT}}: E(F^{\mathrm{sep}})_{\bbq}\otimes_M E(F^{\mathrm{sep}})_{\bbq} \to \bbc.
\]
Using the trace map $\mathrm{Tr}_{M\otimes \bbc/\bbc}$, we extend this to an $M$-bilinear pairing
\[
\langle \cdot, ~ \cdot \rangle_{\mathrm{NT}}^{M}: E(F^{\mathrm{sep}})_{\bbq}\otimes_M E(F^{\mathrm{sep}})_{\bbq}\to M\otimes_{\bbq} \bbc
\]

Let $y_0$ be the Heegner point of conductor $1$ defined over the ring class field $K[1]$ of $K$. We denote its trace to $K$ by
\[
P_K := \Tr_{K[1]/K}(y_0)\in E(K)
\]
By the global Langlands correspondence for $\mathrm{GL}_2$ over function fields proved by Drinfeld~\cite{Dri77}, $E$ corresponds to a cuspidal automorphic representation $\pi_E$ whose automorphic $L$-function agrees with the Hasse-Weil $L$-function of $E$ up to normalization.

Let $\chi:\Gal(K/k)\to \bbc^\times$ be a character.
Fix an automorphic vector $\phi\in \pi_E$. Following the automorphic construction of Heegner points, we define the $\chi$-twisted point by
\[
P_{\chi}=P_{\chi}(\phi):=\sum_{\sigma\in \Gal(K[\mathfrak{p}]/K)}\chi(\sigma)y_0^\sigma\in E(K)\otimes \bbc.
\]
This construction corresponds to the toric period appearing in the function field analogue of the automorphic Gross-Zagier-Zhang formula proved by Qiu~\cite{Qiu22}, reduced to the elliptic curve cases:
\begin{lemma} \cite[Theorem 1.2.1]{Qiu22} \label{lem:GZ_for_FF}
    For any $\phi\in \pi_E$, we have
    \[
    \langle P_{\chi}, P_{\chi}\rangle_{\mathrm{NT}} = \frac{L(2,1_k)L'(1/2,\pi_E,\chi)}{4L(1,\eta)^2L(1,\pi_E,\mathrm{ad})} \alpha_{\pi_E}(\phi,\phi).
    \]
    In particular,
    \[
    P_\chi \text{ is non-torsion} \text{ if and only if } 
    L'(E/K,1,\chi) \text{ is non-vanishing}.
    \]
\end{lemma}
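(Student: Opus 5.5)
The plan is to deduce this statement from Qiu's automorphic Gross--Zagier formula over function fields \cite[Theorem~1.2.1]{Qiu22}, specialized to the data $(E,K,\chi)$ fixed above. There are two parts: translating Qiu's toric-period statement into the explicit point $P_\chi$, and deducing the ``in particular'' clause from the non-vanishing of every auxiliary factor.

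\emph{Specialization.} Qiu's theorem applies to a cuspidal automorphic representation $\pi$ of $\mathrm{GL}_2$ over $k$, a quadratic extension $K/k$ with $\infty$ non-split, and a finite-order character $\chi$ of $K^\times\backslash\mathbb{A}_K^\times$. It expresses the height of the $\chi$-component of the Heegner (CM) cycle on the relevant Drinfeld--Shimura curve as $L'(1/2,\pi,\chi)$ times global constants and a product of local toric periods $\alpha_\pi(\phi,\phi)$.
\begin{enumerate}[(i)]
\item Take $\pi=\pi_E$, the representation attached to $E$ by Drinfeld's correspondence \cite{Dri77}.
\item Under the Heegner hypothesis every place dividing $\mathfrak{n}$ splits in $K$, so the local root-number conditions force the quaternion algebra in Qiu's theorem to be $M_2(k)$. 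The curve is therefore the Drinfeld modular curve $X_0(\mathfrak{n})$.
\item The CM points attached to $\cO_K$ (conductor $1$) are exactly the $x_0^\sigma$.
\item Composing with the modular parametrization $\pi\colon X_0(\mathfrak{n})\to E$ of \cite[Section~8]{GR96} turns the $\chi$-isotypic projection of the CM divisor into $P_\chi=\sum_\sigma\chi(\sigma)y_0^\sigma$. Here $\chi$ is viewed as a character of the class group, via \eqref{eq:Galois_class field tower}.
\item Modularity of $\pi$ makes the Néron--Tate height on $E$ match the height on the Jacobian, up to the positive constant $\deg\pi$, which is absorbed into $\alpha_{\pi_E}$.
\end{enumerate}
Finally, after the standard shift $s\mapsto s-1/2$, the automorphic $L$-function satisfies $L(s,\pi_E,\chi)=L(E/K,s+1/2,\chi)$ up to finitely many nonzero local factors. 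Hence $L'(1/2,\pi_E,\chi)$ vanishes if and only if $L'(E/K,1,\chi)$ does.

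\emph{The ``in particular''.} I will check that every factor other than $L'$ on the right-hand side is finite and nonzero.
\begin{enumerate}[(i)]
\item $L(2,1_k)$ lies in the region of absolute convergence of the zeta function of $k$, so it is finite and nonzero.
\item $L(1,\eta)\neq 0$, since $\eta$ is a nontrivial quadratic character. Over function fields this follows from the Weil bound on the zeros of the associated $L$-polynomial.
\item $L(1,\pi_E,\mathrm{ad})\neq 0$ at the edge of the critical strip. This uses that $\pi_E$ is cuspidal and not dihedral, which holds because $E$ is non-isotrivial, so that $\mathrm{ad}\,\pi_E$ is cuspidal on $\mathrm{GL}_3$ by Gelbart--Jacquet.
\item $\alpha_{\pi_E}(\phi,\phi)\neq0$ for the newform-type vector $\phi$ that produces $y_0$. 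By the local Tunnell--Saito dichotomy, the local toric functional is nonzero exactly when the local root-number conditions hold, and the Heegner hypothesis guarantees these conditions. For this explicit $\phi$ the product of normalized local periods is a product of explicit nonzero local constants at places dividing $\mathfrak{n}$ and $\infty$.
\end{enumerate}
It then remains to use positivity. The Néron--Tate pairing is positive definite on $E(K[1])\otimes\mathbb{R}$, and its Hermitian (sesquilinear) extension to $E(K[1])\otimes\bbc$ is positive definite as well. Therefore $\langle P_\chi,P_\chi\rangle_{\mathrm{NT}}=0$ if and only if $P_\chi=0$ in $E\otimes\bbc$, that is, if and only if $P_\chi$ is torsion.

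\emph{Main obstacle.} I expect the hardest step to be the non-vanishing of $\alpha_{\pi_E}(\phi,\phi)$ for the specific vector $\phi$ giving the Heegner point, rather than merely for some $\phi$. I would first try Qiu's explicit local computations at unramified places. At split places dividing $\mathfrak{n}$ and at $\infty$, I would try a direct Iwahori-level computation in the style of Cai--Shu--Tian's explicit Gross--Zagier formula.

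A secondary point of care is interpreting the pairing correctly on $E(K[1])\otimes\bbc$. The formula must be read with $\langle\cdot,\cdot\rangle_{\mathrm{NT}}$ extended Hermitian, conjugate-linearly in the second variable, so that the left-hand side is a nonnegative real number.
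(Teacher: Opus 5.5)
Your proposal is correct and follows the paper's route: specialize Qiu's formula to $\pi_E$ on $X_0(\mathfrak{n})$, identify Qiu's toric point $P_\Omega(\phi)$ with $P_\chi$, check that $L(2,1_k)$, $L(1,\eta)$, $L(1,\pi_E,\mathrm{ad})$ and $\alpha_{\pi_E}(\phi,\phi)$ are nonzero, and match normalizations via $s\mapsto s+1/2$. Where you differ, you are more careful than the paper: you require $\alpha_{\pi_E}(\phi,\phi)\neq 0$ for the specific vector that produces $y_0$ rather than for some freely chosen $\phi_v$ (a test-vector statement, which you only sketch), and you use positive-definiteness of the Hermitian N\'eron--Tate pairing rather than mere non-degeneracy to get the ``in particular''.
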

\begin{proof}
    The function field analogue of the Gross-Zagier-Zhang formula is established by Qiu~\cite[Theorem 1.2.1]{Qiu22} for abelian varieties. For an abelian variety $A$ and vectors $\phi\in\pi_A$ and $\varphi\in\pi_{A^\vee}$, Qiu proves that
    \[
    \langle P_{\Omega}(\phi), P_{\Omega^{-1}}(\varphi)\rangle_{\mathrm{NT}}^{L'} = \frac{L(2,1_k)L'(1/2,\pi_A,\Omega)}{4L(1,\eta)^2L(1,\pi_A,\mathrm{ad})} \alpha_{\pi_A}(\phi,\varphi) \in K'\otimes_{\bbq} \bbc,
    \]
    We specialize this formula to the elliptic curve $E$.
    In Qiu's construction, we have 
    \[
    P_{\Omega}(\phi) = \int_{\Gal(E^{\mathrm{ab}}/E)} \phi(P_0)^\tau \otimes \Omega(\tau) d\tau,
    \]
    where $P_0$ is a CM point.

    In our setting, the Heegner point $y_0$ is defined over the ring class field $K[1]$, so the above Galois integral factors through the finite group $\Gal(K[1]/K)$. Twisting by $\Omega$ corresponds to twisting by the character $\chi$ on $\Gal(K[1]/K)$, and therefore the automorphic point $P_{\Omega}(\phi)$ coincides with the twisted point $P_{\chi}$.

    Taking $\varphi=\phi$ and noting that $\Omega^{-1}=\Omega$ for quadratic characters, Qiu's formula specializes to the desired form.

    It remains to check that the other factors are non-zero.
    Since $\eta$ is a non-trivial quadratic character of $K/k$, the $L$-function $L(s,\eta)$ is entire in our setting, so $L(1,\eta)\neq 0$.
    The adjoint $L$-function of $\pi_E$ is defined by
    \[
    L(s,\pi_E,\mathrm{ad}) := \frac{L(s,\pi_E\times \pi_E^{\vee})}{L(s,1_k)} = L(s,\mathrm{Sym}^2\pi_E)
    \]
    By the symmetric square lifting of Gelbart and Jacquet~\cite[Theorem 9.3]{GJ78}, this $L$-function is holomorphic and non-vanishing at $s=1$.

    Since almost all places $v$ are unramified, the local factor $\alpha_{\pi_v}(\phi_v,\phi_v)$ equals $1$. At the finitely many ramified places, one can choose $\phi_v\neq 0$ so that the local toric integral over $F_v^\times/k_v^\times$ (which is essentially a finite sum) is non-zero.
    Hence,
    \[
    \alpha_{\pi_E}(\phi,\phi)
    =\prod_v \alpha_{\pi_v}(\phi_v,\phi_v)\neq 0.
    \]

    For the Dedekind zeta function
    \[
    L(s,1_k)=\prod_v (1-q_v^{-s})^{-1},
    \]
    the Euler product converges absolutely for $\Re(s)>1$. 
    In particular, at $s=2$ each Euler factor $(1-q_v^{-2})^{-1}$ is positive, so the product converges to a positive real number.
    Hence, $L(2,1_k)\neq 0$.
    
    Finally, we relate the automorphic and geometric $L$-functions. The automorphic $L$-function $L(s,\pi_E,\chi)$ is normalized so that its functional equation exchanges $s$ and $1-s$, and therefore its central point is $s=1/2$. On the other hand, the Hasse-Weil $L$-function $L(E/K,s,\chi)$ satisfies a functional equation relating $s$ and $2-s$, whose central point is $s=1$. These normalizations differ by the shift $s\mapsto s+1/2$, under which $L'(1/2,\pi_E,\chi)$ corresponds to $L'(E/K,1,\chi)$.
    This proves the stated formula and the equivalence between the
    non-torsion of $P_\chi$ and the non-vanishing of $L'(1/2,\pi_E,\chi)$.    
\end{proof}

\begin{remark}
In the special case $k=\mathbb{F}_q[T]$, the formula was previously established by R{\"u}ck and Tipp~\cite[Theorem 4.2.1]{RT00} in the modular framework.
\end{remark}

\subsection{Constructing imaginary quadratic fields}
Let $E/k$ be a non-isotrivial elliptic curve over $k$ of conductor $\mathfrak{n}\infty$. 

Ulmer's geometric non-vanishing result~\cite[Theorem 1.2]{Ulm05} guarantees the existence of an imaginary quadratic extension of a suitable finite separable extension of $k$ satisfying the Heegner hypothesis.

This provides precisely the setting needed to state the following lemma for applying Lemma~\ref{lem:GZ_for_FF}:

\begin{lemma} \label{lem:cond_GZ}
    Suppose that $k=\mathbb{F}_q(\mathcal{C})$ has characteristic $p>3$. Let $E/k$ be a non-isotrivial elliptic curve over $k$ of conductor $\mathfrak{n}\infty$, and suppose that $E/k$ has analytic rank at most $1$. Then, there exists a finite separable extension $F$ of $k$ and an imaginary quadratic extension~$K$ of $F$ satisfying the following conditions:
    \begin{enumerate}[\normalfont(i)]
        \item $(E,K)$ satisfies the Heegner hypothesis.
        \item $L'(E/K,1)$ is non-vanishing.
    \end{enumerate}
\end{lemma}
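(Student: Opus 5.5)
The plan is to deduce the lemma from Ulmer's geometric non-vanishing theorem \cite[Theorem~1.2]{Ulm05}, combined with the factorization $L(E/K,s)=L(E/F,s)L(E/F,\eta,s)$. Roughly, Ulmer proves the following. After replacing $k$ by a suitable finite separable (geometric) extension $F$ (in his setting, an extension of the form $F=\mathbb{F}_q(\mathcal{C}')$ obtained by a base change making the relevant monodromy large), there are infinitely many quadratic characters $\eta$ of $F$ with prescribed local behavior at a finite set $S$ of places. For such $\eta$, the twisted $L$-function $L(E/F,\eta,s)$ vanishes at $s=1$ to the minimal order allowed by its root number, namely $0$ or $1$.

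\textbf{Step 1: choose the local conditions.} I would choose $S$ to contain the places dividing $\mathfrak{n}$ together with $\infty$. At each place $v\mid\mathfrak{n}$ I require $\eta_v=1$, so that $v$ splits in $K$; this gives the Heegner hypothesis (i). At $\infty$ I require $\eta_\infty$ to be non-trivial, so that $\infty$ does not split and $K/F$ is imaginary. With these conditions, the standard root number computation gives
\[
\varepsilon(E/F,\eta)=\eta(\mathfrak{n})\,\varepsilon(E/F)\cdot(\text{contribution at }\infty)=-\varepsilon(E/F)\cdot\varepsilon(E/F).
\]
It follows that $\varepsilon(E/K)=\varepsilon(E/F)\varepsilon(E/F,\eta)=-1$, so the analytic rank of $E/K$ is odd.

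\textbf{Step 2: control the rank of the base factor.} I want $\operatorname{ord}_{s=1}L(E/F,s)+\operatorname{ord}_{s=1}L(E/F,\eta,s)=1$. The hypothesis gives analytic rank at most $1$ over $k$, but after base change to $F$ the rank could grow. So I would pick $F$ inside Ulmer's framework so that $L(E/F,s)$ keeps order at most $1$ at $s=1$. One option is to use the flexibility in Ulmer's construction, which I believe allows $F/k$ to be linearly disjoint from or trivial relative to the relevant data; alternatively one could argue directly. Then I split into cases. If $\operatorname{ord}L(E/F,s)=1$, the root number of the twist is $+1$, and Ulmer gives a twist with $L(E/F,\eta,1)\neq0$. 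If $\operatorname{ord}L(E/F,s)=0$, the root number of the twist is $-1$, and Ulmer gives a twist of order exactly $1$. In either case the total order is $1$, so $L'(E/K,1)\neq0$, which is (ii).

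\textbf{Main obstacle.} I expect the hard part to be matching Ulmer's hypotheses and conclusion precisely. Ulmer's result is geometric: it concerns $\overline{\mathbb{F}}_q$ or requires enlarging the constant field and passing to a finite extension. So the step I expect to be most delicate is showing that the local conditions at $S$ (splitting at $\mathfrak{n}$, inertness or ramification at $\infty$) can be imposed simultaneously with the non-vanishing, while keeping the analytic rank of $E/F$ at most $1$. The first thing I would try is to take $F$ to be a constant field extension of sufficiently large odd degree, or the specific extension Ulmer produces, and then check directly that the factor $L(E/F,s)$ does not acquire extra zeros at $s=1$.
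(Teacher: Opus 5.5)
Your framework (Ulmer's non-vanishing for twists, the factorization $L(E/K,s)=L(E/F,s)L(E/F,\eta,s)$, and a root-number parity count) is the right circle of ideas. As written, however, the proof of (ii) has a genuine gap at Step~2, which you leave open.

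Ulmer's twisting result only supplies an $\eta$ of minimal vanishing order over whatever extension $F$ that theorem requires. To conclude $\operatorname{ord}_{s=1}L(E/K,s)=1$ you also need $\operatorname{ord}_{s=1}L(E/F,s)\le 1$ for that same $F$, whereas the hypothesis only controls the order over $k$.

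Your proposed remedy, a constant field extension of large odd degree, does not work as stated. For $F=\mathbb{F}_{q^d}k$ one has
\[
L(E/F,T^d)=\prod_{\zeta^d=1}L(E/k,\zeta T),
\]
so $\operatorname{ord}_{s=1}L(E/F,s)=\sum_{\zeta^d=1}\operatorname{ord}_{T=\zeta q^{-1}}L(E/k,T)$. Nothing prevents $L(E/k,T)$ from vanishing at $\zeta q^{-1}$ for a root of unity $\zeta\neq 1$ of odd order; such zeros arise, for instance, from points over $\mathbb{F}_{q^m}k$ that do not come from $k$. What is true is that only finitely many such $\zeta$ occur, so a degree $d$ prime to all their orders (e.g.\ a large prime) preserves the analytic rank. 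You would then still have to check that Ulmer's twisting theorem is actually available over that particular $\mathbb{F}_{q^d}k$, with the local conditions at $S$ imposed. That is exactly the matching you flagged and did not carry out. If the extension Ulmer needs is partly geometric, the additional factors of $L(E/F,s)$ are Artin twists of $L(E/k,s)$, and their non-vanishing at $s=1$ is a separate problem that ``taking Ulmer's $F$'' does not settle by itself.

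The paper avoids all of this by not rebuilding the argument. It cites Ulmer's Theorem~1.2 as a package that produces $F$ and $K$ together: items (a)--(d) give the Heegner hypothesis and item (e) gives directly that $L'(E/K,1)\neq 0$. So the control of the base-changed factor $L(E/F,s)$ is part of Ulmer's conclusion rather than something you must supply. Section~3 even uses that $E/F$ keeps analytic rank one, again attributed to that theorem.

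Separately, fix the display in Step~1. It should read $\varepsilon(E/F,\eta)=-\varepsilon(E/F)$, not $-\varepsilon(E/F)\cdot\varepsilon(E/F)$; the latter would force $\varepsilon(E/F,\eta)=-1$ and contradict your own case split in Step~2. Also note that the resulting sign $\varepsilon(E/K)=-1$ uses that $E$ has split multiplicative reduction at $\infty$, which is implicit in the conductor $\mathfrak{n}\infty$ and the Drinfeld parametrization.
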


\begin{proof}
    By~\cite[Theorem 1.2(a)--(d)]{Ulm05}, the quadratic extension $K/F$ can be chosen so that the Heegner hypothesis (i) is satisfied. Furthermore, \cite[Theorem 1.2(e)]{Ulm05} guarantees that $L'(E/K,1)$ is non-vanishing, establishing (ii).
\end{proof}

\subsection{Elementary lemmas for ranks of elliptic curves}

Let $E/k$ be an elliptic curve over a field $k$, and let $K/k$ be a finite separable extension. Given a point $P \in E(K)$, the trace $\Tr_{K/k}(P) \in E(k)$ is defined by the formula 
\[\Tr_{K/k}(P) := \sum_{\sigma} \sigma(P)\]
where the sum runs over all embeddings of $K$ into $k^{\mathrm{sep}}$ fixing $k$.

\begin{lemma}\label{same_rank}
    Let $K/k$ be a finite separable extension of a global function field $k$ and let $E/k$ be an elliptic curve. Assume that
    \[\operatorname{rank} E(k) = \operatorname{rank} E(K) = 1.\]
    Then, for any non-torsion point $P\in E(K)$, the trace $\Tr_{K/k}(P)$ is also non-torsion in $E(k)$.
\end{lemma}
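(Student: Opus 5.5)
Since both Mordell--Weil groups have rank one, we can work in $E(K)\otimes\bbq$, which is a one-dimensional $\bbq$-vector space. The plan is to compare $\Tr_{K/k}(P)$ with $P$ itself through the natural inclusion $E(k)\subseteq E(K)$. Fix a non-torsion point $Q\in E(k)$. Since $\operatorname{rank}E(k)=1$, such a point exists. Since $E(k)\subseteq E(K)$ and $\operatorname{rank}E(K)=1$, the point $Q$ is also a non-torsion element of $E(K)$. Therefore $E(K)\otimes\bbq=\bbq\cdot Q$. Given a non-torsion $P\in E(K)$, we get integers $m\neq 0$ and $n\neq 0$ with $mP = nQ + T$ for some torsion point $T\in E(K)_{\mathrm{tors}}$. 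In fact $n\neq 0$ follows from $P$ being non-torsion.

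Next we apply the trace. The map $\Tr_{K/k}:E(K)\to E(k)$ is a group homomorphism. Since $Q\in E(k)$ is fixed by every embedding of $K$ into $k^{\mathrm{sep}}$ over $k$, we have $\Tr_{K/k}(Q)=[K:k]\,Q$. Applying the trace to the relation $mP=nQ+T$ gives
\[
m\,\Tr_{K/k}(P) = n[K:k]\,Q + \Tr_{K/k}(T).
\]
Here $\Tr_{K/k}(T)$ is torsion, being a finite sum of conjugates of a torsion point. Since $n[K:k]\neq 0$ and $Q$ is non-torsion, the right-hand side is non-torsion. Hence $m\,\Tr_{K/k}(P)$ is non-torsion, and so $\Tr_{K/k}(P)$ is non-torsion in $E(k)$.

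There is essentially no real obstacle. The only care needed is with the definition of the trace when $K/k$ is separable but not Galois. The sum runs over the $[K:k]$ embeddings $\sigma:K\hookrightarrow k^{\mathrm{sep}}$, so a point of $E(k)$ is sent to $[K:k]$ times itself. We should also check that the trace lands in $E(k)$, but this is part of the definition recalled above. Finally, separability guarantees there are exactly $[K:k]$ embeddings, which is what makes the multiplication-by-degree identity hold.
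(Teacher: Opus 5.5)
Your proof is correct and follows essentially the same route as the paper. Both arguments write a multiple of $P$ as a multiple of a point of $E(k)$ modulo torsion, then apply the trace using $\Tr_{K/k}(Q)=[K:k]\,Q$ for $Q\in E(k)$. The only cosmetic difference is that the paper removes the torsion term by multiplying through to get $mP=Q$ exactly, whereas you keep it and observe that $\Tr_{K/k}(T)$ is torsion, which works equally well.
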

\begin{proof}
    Since $E(k) \subseteq E(K)$ and both have rank $1$, after tensoring with $\bbq$, we obtain the equality,
    \begin{equation}\label{rank_1_equality}
        E(k)\otimes_{\bbz} \bbq = E(K)\otimes_{\bbz} \bbq.
    \end{equation}
    Let $P\in E(K)$ be a non-torsion point. Then, \eqref{rank_1_equality} above yields a rational point $Q \in E(k)$ and an integer $m \geq 1$ such that
    \[mP \equiv Q \quad \text{modulo }E(K)_{\mathrm{tors}}.\]
    By appropriately replacing $m$ and $Q$ to eliminate the torsion part, we may assume that
    \[mP=Q.\]
    Note that this forces $Q$ to be non-torsion.
    
    Applying the trace map $\Tr_{K/k}: E(K)\to E(k)$ which is $\bbz$-linear, we have 
    \[m\Tr_{K/k}(P) = \Tr_{K/k}(mP)=\Tr_{K/k}(Q) = [K:k]Q.\]
    Therefore, $\Tr_{K/k}(P)$ cannot be torsion in $E(k)$, completing the proof.
\end{proof}

We will also need the following lemma regarding the rank of an elliptic curve over an infinite field extension.

\begin{lemma} \cite[Lemma 3.3]{BI08}
    Let $E/k$ be an elliptic curve and $L/k$ be a Galois extension of $k$, and let $\{P_m\}$ be an infinite sequence of points in $E(L)$. Denote by $\mathcal{S}$ the subgroup of $E(L)$ generated by the points $P_m$. Suppose that:
    \begin{enumerate}[\normalfont(i)]
        \item $E(L)_{\operatorname{tors}}$ is finite, and
        \item $\mathcal{S}$ is not finitely generated.
    \end{enumerate}
    Then, we have
    \[
    \dim \mathcal{S}\otimes \bbq = \dim E(L)\otimes \bbq = \infty.
    \]
\end{lemma}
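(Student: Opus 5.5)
We argue by contrapositive: assume $\mathcal{S}$ is not finitely generated and $\dim \mathcal{S}\otimes\bbq<\infty$, and derive a contradiction with the finiteness of $E(L)_{\operatorname{tors}}$. Note that $\dim \mathcal{S}\otimes\bbq\le\dim E(L)\otimes\bbq$ because $\mathcal{S}\subseteq E(L)$, and tensoring with $\bbq$ is exact. So it suffices to show $\dim\mathcal{S}\otimes\bbq=\infty$; the equality with $\dim E(L)\otimes\bbq$ then follows since both are infinite.

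Suppose $r:=\dim \mathcal{S}\otimes\bbq<\infty$. Since $\mathcal{S}$ is generated by the $P_m$, finitely many of them span $\mathcal{S}\otimes\bbq$, say $P_{m_1},\dots,P_{m_r}$. Let $\mathcal{S}_0\subseteq\mathcal{S}$ be the subgroup they generate; it is free of rank $r$, because the images in $\mathcal{S}\otimes\bbq$ are linearly independent. Every element of $\mathcal{S}$ has a nonzero multiple in $\mathcal{S}_0$, so $\mathcal{S}/\mathcal{S}_0$ is a torsion group.

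Next we bound $\mathcal{S}/\mathcal{S}_0$, which is the key step. Consider the saturation $\mathcal{S}_0^{\mathrm{sat}}:=\{P\in E(L): nP\in\mathcal{S}_0 \text{ for some } n\ge1\}$, which contains $\mathcal{S}$. The plan is to show $\mathcal{S}_0^{\mathrm{sat}}$ is finitely generated. The map $\mathcal{S}_0^{\mathrm{sat}}\to \mathcal{S}_0\otimes\bbq$, $P\mapsto P\otimes1$, has kernel $E(L)_{\operatorname{tors}}\cap\mathcal{S}_0^{\mathrm{sat}}$, which is finite by hypothesis (i). It therefore suffices to show that the image is a lattice in $\mathcal{S}_0\otimes\bbq\cong\bbq^r$, i.e.\ that the divisibility in $\mathcal{S}_0$ is bounded.

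This is the main obstacle, and we would handle it with the N\'eron--Tate height. Each point of $\mathcal{S}$ lies in some finite subextension of $L/k$, and the canonical height is invariant under base extension when normalized by degree. Hence $\hat h$ gives a positive definite quadratic form on $\mathcal{S}_0\otimes\mathbb{R}$: it is positive on $\mathcal{S}_0\setminus\{0\}$, and definiteness on the real span follows from the standard Mordell--Weil argument over the finite field $k(P_{m_1},\dots,P_{m_r})$.

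A positive definite form alone does not rule out arbitrarily small heights of points of $E(L)$ along $\mathcal{S}_0\otimes\bbq$. Since $L/k$ may be infinite, we would instead bound the denominators directly. Suppose $nP=Q\in\mathcal{S}_0$. For any $\sigma$ in the Galois group, $\sigma P-P$ is $n$-torsion, so it lies in $E(L)_{\operatorname{tors}}$. The $n$-torsion is defined over $L$ only for $n$ dividing the exponent $N$ of $E(L)_{\operatorname{tors}}$, so $P$ is determined up to torsion by $Q$ in a controlled way. The concrete route is through the Kummer map: the obstruction to dividing $Q\in\mathcal{S}_0$ by a prime power $\ell^e$ inside $E(L)$ is governed by $E[\ell^e](L)$. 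Since $E[\ell^\infty](L)$ is finite, for $e$ large an $\ell^e$-divisible element of $\mathcal{S}_0$ must already be $\ell^{e-c}$-divisible in $\mathcal{S}_0$ itself, with $c$ bounded. Combined with the height bound, which restricts which primes $\ell$ can occur, this shows that $\mathcal{S}_0^{\mathrm{sat}}/\mathcal{S}_0$ is finite.

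With that finiteness in hand, $\mathcal{S}\subseteq\mathcal{S}_0^{\mathrm{sat}}$ is finitely generated, since $\mathcal{S}_0^{\mathrm{sat}}$ is a finitely generated abelian group and subgroups of finitely generated abelian groups are finitely generated. This contradicts hypothesis (ii), so $\dim\mathcal{S}\otimes\bbq=\infty$ and hence $\dim E(L)\otimes\bbq=\infty$.
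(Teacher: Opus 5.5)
Your setup is right: you choose $P_{m_1},\dots,P_{m_r}$ spanning $\mathcal{S}\otimes\bbq$, let $\mathcal{S}_0$ be their span, and aim to show $\mathcal{S}$ is finitely generated. You also write down the key observation that $\sigma P-P$ is torsion. But the step you yourself call the main obstacle is never proved.

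\begin{itemize}
\item Your claim that an $\ell^e$-divisible element of $\mathcal{S}_0$ must be $\ell^{e-c}$-divisible \emph{in $\mathcal{S}_0$ itself} does not follow from finiteness of $E[\ell^\infty](L)$. The Galois/Kummer argument only transfers divisibility from $E(L)$ down to $E(k')$ for a finite subextension $k'\subseteq L$. Getting from $E(k')$ back to $\mathcal{S}_0$ requires that $E(k')$ be finitely generated, and you never invoke this at that point.
\item The assertion that ``the height bound restricts which primes $\ell$ can occur'' is unsupported. As you note yourself, positive definiteness of $\hat h$ on $\mathcal{S}_0\otimes\mathbb{R}$ gives no lower bound for heights of points of $E(L)$. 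So nothing in the proposal rules out infinitely many primes.
\item Some global input is genuinely needed here. Take $k=L=\mathbb{Q}_p$ (trivially Galois, finite torsion) and $\ell\neq p$. The kernel of reduction $E_1(\mathbb{Q}_p)$ is pro-$p$, hence uniquely $\ell$-divisible. For non-torsion $P\in E_1(\mathbb{Q}_p)$, the points $P/\ell^n$ generate a subgroup isomorphic to $\bbz[1/\ell]$, which has rank one and is not finitely generated. So your bounded-divisibility claim fails there.
\item ``For any $\sigma$ in the Galois group'' is wrong as written: $\sigma P-P$ is torsion only for $\sigma$ fixing $Q$.
\end{itemize}

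The missing idea is to finish the Galois argument uniformly, using Mordell--Weil over a finite extension, instead of prime by prime.

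Let $k'=k(P_{m_1},\dots,P_{m_r})\subseteq L$; it is finite over $k$, hence a global field. Since $L/k$ is Galois, every $\sigma\in\Gal(k^{\mathrm{sep}}/k')$ maps $E(L)$ into itself; this is exactly where the Galois hypothesis enters.

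For $P\in\mathcal{S}$ choose $n\ge 1$ with $nP\in\mathcal{S}_0\subseteq E(k')$. Then $n(\sigma P-P)=0$, so $\sigma P-P\in E(L)_{\mathrm{tors}}$. Let $N$ be the exponent of the finite group $E(L)_{\mathrm{tors}}$. Then $\sigma(NP)=NP$ for all such $\sigma$, i.e.\ $NP\in E(k')$.

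Thus $N\mathcal{S}\subseteq E(k')$, which is finitely generated by the Mordell--Weil (Lang--N\'eron) theorem. The kernel of $[N]\colon\mathcal{S}\to N\mathcal{S}$ lies in the finite group $E(L)_{\mathrm{tors}}$. Hence $\mathcal{S}$ is finitely generated, contradicting (ii).

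The paper gives no proof of its own and simply cites Breuer--Im. This direct descent is the standard argument, and with it the heights and the $\ell$-adic Kummer analysis can be dropped entirely.
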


\section{Proof of Theorem~\ref{thm:main}}\label{sec_analytic_rank_1}

In this section, we prove Theorem~\ref{thm:main}. Throughout this section, we assume that:
\begin{itemize}
    \item $k$ is a global function field of characteristic $p>3$,
    \item $E/k$ is a non-isotrivial elliptic curve,
    \item $E/k$ has analytic rank $1$,
    \item $G \subset G_k$ is a topologically finitely generated closed subgroup.
\end{itemize}
By Lemma~\ref{lem:cond_GZ}, there exists a finite separable extension $F/k$ and a quadratic extension $K/F$ such that both $E/F$ and $E/K$ have analytic rank~$1$, and such that  $E/K$ satisfies the Heegner hypothesis. By Lemma~\ref{lem:GZ_for_FF}, the Heegner point $P_K \in E(K)$ is non-torsion. By Lemma~\ref{same_rank}, the trace 
\[P_k := \Tr_{K/k}(P_K) \in E(k)\] 
is also non-torsion.

Recall that we have higher Heegner points $y_n \in E(K[\mathfrak{p}^n])$. We compute their full trace $\Tr_{K[\mathfrak{p}^n]/K[1]}(y_n)$ by iterating the norm-compatibility~\eqref{eq:second norm-compatibility}.

\begin{lemma}\label{lem:vertical_recurrence}
    For all $n \geq 0$, 
    \[\Tr_{K[\mathfrak{p}^n]/K[1]}(y_n) = A_ny_0,\]
    where $A_n$ is defined recursively by
    \begin{equation} \label{eq:coefficient_recurrence}
      A_0 = 1, \quad A_1=a_\mathfrak{p}, \quad   A_{n+1} = a_\mathfrak{p}A_n - q_\mathfrak{p}A_{n-1},  \text{ for } n \geq 1.
    \end{equation}
\end{lemma}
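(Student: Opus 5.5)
We prove the formula by induction on $n$. The idea is to push each trace down the tower in two stages, first to $K[\mathfrak{p}^n]$ and then to $K[1]$, and to use the norm-compatibilities of Lemma~\ref{norm-compatibility} together with the degrees of the layers of the ring class tower. We use the convention $K[\mathfrak{p}^0]=K[1]$. The case $n=0$ is trivial: the trace is the identity, so $A_0=1$. The case $n=1$ is exactly the first relation of Lemma~\ref{norm-compatibility}, which gives $A_1=a_{\mathfrak{p}}$.

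For the inductive step, assume the formula for $n$ and $n-1$, with $n\ge 1$. By transitivity of traces,
\[
\Tr_{K[\mathfrak{p}^{n+1}]/K[1]}(y_{n+1})=\Tr_{K[\mathfrak{p}^n]/K[1]}\bigl(\Tr_{K[\mathfrak{p}^{n+1}]/K[\mathfrak{p}^n]}(y_{n+1})\bigr)=a_{\mathfrak{p}}\Tr_{K[\mathfrak{p}^n]/K[1]}(y_n)-\Tr_{K[\mathfrak{p}^n]/K[1]}(y_{n-1}),
\]
using \eqref{eq:second norm-compatibility}. The first term equals $a_{\mathfrak{p}}A_ny_0$ by induction. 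For the second term, $y_{n-1}$ is already rational over $K[\mathfrak{p}^{n-1}]$, so
\[
\Tr_{K[\mathfrak{p}^n]/K[1]}(y_{n-1})=[K[\mathfrak{p}^n]:K[\mathfrak{p}^{n-1}]]\cdot \Tr_{K[\mathfrak{p}^{n-1}]/K[1]}(y_{n-1})=[K[\mathfrak{p}^n]:K[\mathfrak{p}^{n-1}]]\,A_{n-1}y_0.
\]
The recurrence \eqref{eq:coefficient_recurrence} therefore reduces to the degree computation $[K[\mathfrak{p}^n]:K[\mathfrak{p}^{n-1}]]=q_{\mathfrak{p}}$.

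For the degree, we use \eqref{eq:Galois_class field tower} and the exact sequence
\[
1\to \cO_{n-1}^\times/\cO_n^\times\to (\cO_{n-1}/\mathfrak{p}^n\cO_{n-1})^\times/(\cO_n/\mathfrak{p}^n\cO_{n-1})^\times\to \operatorname{Pic}(\cO_n)\to\operatorname{Pic}(\cO_{n-1})\to 1,
\]
in its adelic form $\widehat{\cO}_{n-1,\mathfrak{p}}^\times/\widehat{\cO}_{n,\mathfrak{p}}^\times$ modulo global units. Locally at $\mathfrak{p}$ we have $\cO_{n}=\cO_{k,\mathfrak{p}}+\mathfrak{p}^n\cO_{K,\mathfrak{p}}$. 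For $n\ge 2$, the quotient $\cO_{n-1,\mathfrak{p}}^\times/\cO_{n,\mathfrak{p}}^\times$ is isomorphic to the additive group $\cO_{K,\mathfrak{p}}/(\cO_{k,\mathfrak{p}}+\mathfrak{p}\cO_{K,\mathfrak{p}})$, via $1+\pi^{n-1}x\mapsto x$. This group has order $q_{\mathfrak{p}}$. The global units of $\cO_{n-1}$ and $\cO_n$ both reduce to $\mathbb{F}_q^\times\subset \cO_k^\times$, because $\infty$ does not split in $K$, so the unit groups are finite constants lying in $\cO_k$. Hence no unit correction occurs, and the degree is exactly $q_{\mathfrak{p}}$.

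The main obstacle is the bottom layer $n=1$ of the inductive step, which needs $[K[\mathfrak{p}]:K[1]]$. Here the local quotient $(\cO_K/\mathfrak{p}\cO_K)^\times/(\cO_k/\mathfrak{p})^\times$ has order $q_{\mathfrak{p}}-\epsilon$, where $\epsilon=1,0,-1$ according as $\mathfrak{p}$ splits, ramifies, or is inert in $K$. There may also be a correction from $\cO_K^\times/\cO_k^\times$, although this is typically trivial here since $\infty$ does not split. So the recurrence with coefficient $q_{\mathfrak{p}}$ at this step requires either a normalization of $y_0$ absorbing this discrepancy, or an appropriate choice of $\mathfrak{p}$. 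In the write-up we would first verify the identity for $A_2$ directly, checking whether the base relation of Lemma~\ref{norm-compatibility} is meant with the matching normalization. If it is not, we would modify $A_2$ accordingly, noting that the recurrence holds verbatim for all $n\ge 2$, and that any such constant change does not affect the later use, which needs only that $A_n\neq 0$ for infinitely many $n$.
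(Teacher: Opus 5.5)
Your induction is the paper's argument: transitivity of the trace, the second relation of Lemma~\ref{norm-compatibility}, and $\Tr_{K[\mathfrak{p}^n]/K[1]}(y_{n-1})=[K[\mathfrak{p}^n]:K[\mathfrak{p}^{n-1}]]\,A_{n-1}y_0$. The paper then replaces $[K[\mathfrak{p}^n]:K[\mathfrak{p}^{n-1}]]$ by $q_{\mathfrak{p}}$ for every $n\ge 1$ without comment, and this is exactly the step you flagged. Your worry is justified: the lemma as written is false from $n=2$ on.

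The paper's own proof of Lemma~\ref{p-part_size} (citing Brown) says that $\Gal(K[\mathfrak{p}]/K[1])$ is cyclic of order prime to $p$, so $[K[\mathfrak{p}]:K[1]]\neq q_{\mathfrak{p}}$. On the other hand, the two relations of Lemma~\ref{norm-compatibility} force $\Tr_{K[\mathfrak{p}^2]/K[1]}(y_2)=(a_{\mathfrak{p}}^2-[K[\mathfrak{p}]:K[1]])\,y_0$. Since $\Tr_{K[1]/K}(y_0)=P_K$ is non-torsion, so is $y_0$, and therefore this point is not $(a_{\mathfrak{p}}^2-q_{\mathfrak{p}})y_0$. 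Your degree computation for the layers $n\ge 2$ is correct, and it is the justification the paper omits there.

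So commit to your fallback instead of hedging. The two alternative escapes do not work:
\begin{itemize}
\item Renormalizing $y_0$ cannot help, because the $n=2$ coefficient is forced once the two relations are fixed.
\item The only way to get $[K[\mathfrak{p}]:K[1]]=q_{\mathfrak{p}}$ is to take $\mathfrak{p}$ ramified in $K$. The paper excludes this, since Lemma~\ref{p-part_size} uses that $\mathfrak{p}$ is inert or split. Moreover, for ramified $\mathfrak{p}$ the first relation of Lemma~\ref{norm-compatibility} would pick up an extra term from the $\cO_K$-stable sublattice of index $\mathfrak{p}$.
\end{itemize}

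The correct statement is $A_0=1$, $A_1=a_{\mathfrak{p}}$, $A_2=a_{\mathfrak{p}}^2-[K[\mathfrak{p}]:K[1]]$, and $A_{n+1}=a_{\mathfrak{p}}A_n-q_{\mathfrak{p}}A_{n-1}$ for $n\ge 2$. Your argument proves exactly this.

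For Section~\ref{sec_analytic_rank_1} you need slightly more than you state.
\begin{itemize}
\item You need $A_n\neq 0$ for infinitely many $n$. This holds because $(A_n)_{n\ge 1}$ satisfies the recurrence with $q_{\mathfrak{p}}\neq 0$, and $A_1,A_2$ are not both zero, so no two consecutive terms vanish.
\item You also need the growth bound $|A_n|\ll n\,q_{\mathfrak{p}}^{n/2}$. This survives because $(A_n)_{n\ge1}$ is still a combination of $\lambda^n$ and $\bar\lambda^n$ (or of $\lambda^n$ and $n\lambda^n$ if the roots coincide), with $|\lambda|=q_{\mathfrak{p}}^{1/2}$.
\end{itemize}
The non-vanishing of $A_n$ is also used, unstated and unchecked, in the paper's derivation of $|c|\gg p^{(1/2-\varepsilon)fn}$.
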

\begin{proof}
    The base cases $n=0$ and $n=1$ are clear. Let $n \geq 1$. By the norm-compatibility \eqref{eq:second norm-compatibility},
    \[\Tr_{K[\mathfrak{p}^{n+1}]/K[1]}(y_{n+1}) = \Tr_{K[\mathfrak{p}^{n}]/K[1]}(a_{\mathfrak{p}}y_n - y_{n-1}).\]
    By the induction hypotheses, we have
    \begin{align*}
        \Tr_{K[\mathfrak{p}^{n+1}]/K[1]}(y_{n+1}) &= a_\mathfrak{p}A_n y_0 - [K[\mathfrak{p}^n]:K[\mathfrak{p}^{n-1}]]A_{n-1}y_0 \\
        &= (a_\mathfrak{p}A_n - q_\mathfrak{p}A_{n-1})y_0.
    \end{align*}
    This yields the desired recurrence, completing the induction.
\end{proof}

Since $\Tr_{K[1]/k}(y_0) = P_k$, we conclude that 
\begin{equation*}
    \Tr_{K[\mathfrak{p}^n]/k}(y_n) = A_nP_k,\quad n \geq 0.
\end{equation*}
Recall that $P_k$ is non-torsion.

Note that the recurrence~\eqref{eq:coefficient_recurrence} shows that $A_n$ grows exponentially like~$\lambda^n$, where $\lambda$ is a Frobenius eigenvalue at $\mathfrak{p}$. Explicitly, $\lambda$ is a root of 
\[x^2-a_\mathfrak{p}x + q_\mathfrak{p} = 0.\]
Thus, 
\[\lvert \lambda \rvert \ll a_\mathfrak{p} \ll \sqrt{q_\mathfrak{p}}\]
by the Hasse bound~\cite[Theorem V.1.1]{Sil09}. Then, 
\begin{equation}\label{A_n_size}
    \lvert A_n \rvert \ll \lvert \lambda \rvert^n \ll q_\mathfrak{p}^{n/2}.
\end{equation}
We next want to investigate the size of the $p$-parts of $[K[\mathfrak{p}^n]:k]$ and $[K[\mathfrak{p}^n]^G:k]$. Let $H = G \cap G_{K[1]}$. Since $H$ is an open subgroup of $G$, $H$ is finitely generated, say by $r$ elements.

\begin{lemma}\label{p-part_size}
    We have 
    \[v_p([K[\mathfrak{p}^n]:k]) = (n-1)f + v_p([K[1]:k]),\]
    and 
    \[v_p([K[\mathfrak{p}^n]^G:k]) \geq (n-1)f - r\lceil \log_p n \rceil,\]
    where $q_\mathfrak{p} = p^f$.
\end{lemma}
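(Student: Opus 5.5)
The first formula comes from the class number formula for orders. By \eqref{eq:Galois_class field tower}, $[K[\mathfrak{p}^n]:K[1]] = |\operatorname{Pic}(\cO_n)|/|\operatorname{Pic}(\cO_K)|$, and the standard exact sequence
\[
1 \to \cO_k^\times/\cO_n^\times\ \text{(image)} \to \cO_K^\times/\cO_n^\times \to (\cO_K/\mathfrak{p}^n\cO_K)^\times/(\cO_k/\mathfrak{p}^n)^\times \to \operatorname{Pic}(\cO_n) \to \operatorname{Pic}(\cO_K) \to 1
\]
gives the index. Since $\infty$ does not split in $K$, the unit groups $\cO_K^\times$ and $\cO_k^\times$ are both $\mathbb{F}$-multiples, so their contribution is prime to $p$. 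The quotient $(\cO_K/\mathfrak{p}^n)^\times/(\cO_k/\mathfrak{p}^n)^\times$ has order
\[
q_\mathfrak{p}^{\,n-1}\cdot\frac{|(\cO_K/\mathfrak{p})^\times|}{q_\mathfrak{p}-1}
\]
in every splitting case, because the $1$-units of $\cO_K/\mathfrak{p}^n$ form a group of order $q_\mathfrak{p}^{2(n-1)}$ and those of $\cO_k/\mathfrak{p}^n$ a group of order $q_\mathfrak{p}^{n-1}$. The second factor is prime to $p$ in the split, inert and ramified cases; in the ramified case it is $q_\mathfrak{p}$, and I would need to check that the paper's normalization absorbs this, or else accept a bounded discrepancy. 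Hence $v_p([K[\mathfrak{p}^n]:K[1]])=(n-1)f$. Adding $v_p([K[1]:k])$ via the tower $k\subset K[1]\subset K[\mathfrak{p}^n]$ gives the first formula.

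For the second formula, I would bound $v_p$ of the index of the image of $G$ in the relevant Galois group. Write $N_n = \Gal(K[\mathfrak{p}^n]/K[1])$. Its $p$-part is an abelian $p$-group (by Lemma~\ref{lem:Galois_group_decomposition}, a quotient of $\bbz_p^\infty$). The key claim is that its exponent is at most $p^{\lceil \log_p n\rceil}$. This follows because the $1$-unit group $(1+\mathfrak{p}\cO_{K,\mathfrak{p}})/(1+\mathfrak{p}^n)$ is killed by $p^m$ whenever $p^m\ge n$: indeed $(1+x)^{p^m}=1+x^{p^m}$ in characteristic $p$, and $x^{p^m}\in\mathfrak{p}^{p^m}\subseteq \mathfrak{p}^n$.

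Now $[K[\mathfrak{p}^n]^G:k]$ equals the index of the image $\bar G$ of $G$ in $\Gal(K[\mathfrak{p}^n]/k)$, which is Galois over $k$ when $K$ is taken Galois over $k$ (or after passing to a Galois closure). The image $\bar H$ of $H=G\cap G_{K[1]}$ lies in $N_n$ and has index in $\bar G$ dividing $[K[1]:k]$, a quantity independent of $n$. Since $\bar H$ is an abelian group generated by $r$ elements, its $p$-Sylow is generated by $r$ elements, each of order at most $p^{\lceil\log_p n\rceil}$. Hence $v_p|\bar H|\le r\lceil\log_p n\rceil$, and so
\[
v_p|\bar G|\le r\lceil \log_p n\rceil + v_p([K[1]:k]).
\]
Therefore
\[
v_p([K[\mathfrak{p}^n]^G:k]) = v_p[K[\mathfrak{p}^n]:k]-v_p|\bar G| \ge (n-1)f - r\lceil\log_p n\rceil.
\]

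The main obstacle is the unit-group computation: making the $p$-part exactly $(n-1)f$, especially handling the ramified case and the contribution of global units, and confirming the exponent bound on $p$-power torsion. A secondary point is the Galois-closure issue, since $K[\mathfrak{p}^n]/k$ may fail to be Galois when $F\ne k$. I would handle this by working with the image of $G$ in the Galois closure and noting that the extra degree is bounded independently of $n$, which only affects the claimed bound by a constant if care is not taken.
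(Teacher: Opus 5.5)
Your main argument is the paper's own proof. The unit-group description of $\operatorname{Pic}(\cO_n)$ gives $v_p([K[\mathfrak p^n]:K[1]])=(n-1)f$; the paper instead quotes Brown's (2.3.8) and (2.3.12) and counts layer by layer. The $p$-part of $\Gal(K[\mathfrak p^n]/K[1])$ is a quotient of $1$-units, so its exponent is at most $p^{\lceil\log_p n\rceil}$. An $r$-generated subgroup therefore has $p$-part of order at most $p^{r\lceil\log_p n\rceil}$, and passing from $H=G\cap G_{K[1]}$ to $G$ costs at most $v_p([K[1]:k])$. Your identity $(1+x)^{p^m}=1+x^{p^m}$ justifies the exponent bound, which the paper only asserts, and your bookkeeping with $\bar H\subseteq\bar G$ makes the reduction to $H$ explicit.

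The two caveats you raise cannot be absorbed or patched. They are hypotheses, and the paper uses both without saying so.

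\emph{Ramified case.} If $\mathfrak p$ ramifies in $K$, then $\cO_K/\mathfrak p\cO_K$ has $q_{\mathfrak p}(q_{\mathfrak p}-1)$ units, so the extra factor is exactly $q_{\mathfrak p}$. Your sentence calling this factor prime to $p$ ``in the ramified case'' contradicts your next clause. Then $v_p([K[\mathfrak p^n]:K[1]])=nf$, and the stated equality is false; no normalization fixes this. The paper's proof restricts to $\mathfrak p$ inert or split in $K$ (``Since $\mathfrak p$ is either inert or split in $B$''), and you should state that hypothesis too.

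\emph{Non-Galois case.} Your Galois-closure remedy fails, because the closure of $K[\mathfrak p^n]$ over $k$ does not have bounded degree over $K[\mathfrak p^n]$. Suppose $F/k$ is Galois and $\sigma\in G_k$ moves $\mathfrak p$ to a different prime of $F$, with $\mathfrak p$ unramified in $\sigma(K)/F$. Then $\sigma(K[\mathfrak p^n])/F$ is unramified at $\mathfrak p$. Hence $K\cdot\bigl(K[\mathfrak p^n]\cap\sigma(K[\mathfrak p^n])\bigr)$ is an abelian subextension of $K[\mathfrak p^n]/K$ that is unramified everywhere with $\infty$ split, so it lies in $K[1]$. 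Now take $G=\overline{\langle\sigma\rangle}$, so $H$ is procyclic and $r=1$. For every $n$, $K[\mathfrak p^n]\cap(k^{\mathrm{sep}})^G\subseteq K[\mathfrak p^n]\cap\sigma(K[\mathfrak p^n])\subseteq K[1]$, and the second inequality fails for large $n$.

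So the lemma needs $G$ to act on $K[\mathfrak p^n]$, for example $K[\mathfrak p^n]/k$ Galois, as when $K/k$ is quadratic and $\mathfrak p\subset\cO_k$. The paper's step ``$H$ has index dividing $[K[1]:k]$, so it suffices\ldots'' relies on exactly this. Under that hypothesis and with $\mathfrak p$ unramified in $K$, your argument is complete.
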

\begin{proof}
    Note that $H$ is an open subgroup of $G$ of index dividing $[K[1]:k]$. Therefore, it suffices to prove 
    \begin{equation}\label{p-part1}
        v_p([K[\mathfrak{p}^n]:K[1]]) = f(n-1)
    \end{equation}
    and 
    \begin{equation}\label{p-part2}
        v_p([K[\mathfrak{p}^n]^H:K[1]]) \geq f(n-1) - r\lceil \log_p n \rceil.
    \end{equation}
By \cite[(2.3.8)]{Bro04}, 
    \begin{equation}\label{ring_class_field_galois_group}
        \Gal(K[\mathfrak{p}^n]/K[1]) \cong \frac{(B/\mathfrak{p}^nB)^\times/(A/\mathfrak{p}^nA)^\times}{B^\times/A^\times},
    \end{equation}
    where $A$ and $B$ are rings of integers of $k$ and $K$, respectively. By \cite[(2.3.12)]{Bro04}, $\Gal(K[\mathfrak{p}]/K[1])$ is cyclic of order prime to $p$. On the other hand, for $k \geq 1$, $\Gal(K[\mathfrak{p}^{k+1}]/K[\mathfrak{p}^{k}])$ is a direct sum of cyclic groups of order $p$, and its degree is exactly $q_\mathfrak{p}$. Hence, the Sylow $p$-subgroup of $\Gal(K[\mathfrak{p}^n]/K[1])$ has order $q_\mathfrak{p}^{n-1}$, which proves \eqref{p-part1}.
    
    We now want to investigate the $p$-exponent of $\Gal(K[\mathfrak{p}^n]/K[1])$, namely the exponent of its Sylow $p$-subgroup. By~\eqref{ring_class_field_galois_group}, this is clearly bounded by the $p$-exponent of $(B/\mathfrak{p}^nB)^\times$. 
    
    Recall the exact sequence 
    \[1 \longrightarrow (1+\mathfrak{p})/(1+\mathfrak{p}^n) \longrightarrow (A/\mathfrak{p}^n)^\times \longrightarrow (A/\mathfrak{p})^\times \longrightarrow 1.\]
    Since $A/\mathfrak{p}$ is a field of cardinality $q_\mathfrak{p}$, $(A/\mathfrak{p})^\times$ has cardinality $q_\mathfrak{p}-1$, which is relatively prime to $p$. Therefore, the $p$-part of $(A/\mathfrak{p}^n)^\times$ equals that of $(1+\mathfrak{p})/(1+\mathfrak{p}^n)$. Since the latter group has $p$-exponent $\leq p^{\lceil \log_p n \rceil}$, $(A/\mathfrak{p}^n)^\times$ also has $p$-exponent $\leq p^{\lceil \log_p n \rceil}$.

    Since $\mathfrak{p}$ is either inert or split in $B$, the above argument implies that $(B/\mathfrak{p}^nB)^\times$ has $p$-exponent $\leq p^{\lceil \log_p n \rceil}$. Therefore, $\Gal(K[\mathfrak{p}^n]/K[1])$ also has $p$-exponent at most $\leq p^{\lceil \log_p n \rceil}$. 

    Since $G$ is generated by $r$ elements, and the Sylow $p$-subgroup of $\Gal(K[\mathfrak{p}^n]/K[\mathfrak{p}^n]^G)$ has exponent at most $p^{\lceil \log_p n \rceil}$, it follows that the order of the Sylow $p$-subgroup of $\Gal(K[\mathfrak{p}^n]/K[\mathfrak{p}^n]^G)$ is at most $p^{r\lceil \log_p n \rceil}$.
    On the other hand, we have already observed that the Sylow $p$-subgroup of $\Gal(K[\mathfrak{p}^n]/K[1])$ has order $q_\mathfrak{p}^{n-1}$. Therefore, the Sylow $p$-subgroup of $\Gal(K[\mathfrak{p}^n]^G/K[1])$ has order at least $p^{f(n-1)-r\lceil \log_p n \rceil}$, which proves \eqref{p-part2}.
\end{proof}

We are now ready to complete the proof of Theorem~\ref{thm:main}. Define 
\[z_n := \Tr_{K[\mathfrak{p}^n]/K[\mathfrak{p}^n]^G}(y_n) \in E(K[\mathfrak{p}^n]^G),\quad n \geq 0.\]

Suppose, for the sake of contradiction, that the set
\[
\left\{z_j\right\}_{j\geq r}
\]
generates a finitely generated subgroup of $E(k^{\mathrm{sep}})$. Since each generator is defined over a finite extension of $k$, there exists an integer $m$ such that all $z_j$ are contained in $K[\mathfrak{p}^m]$.

For $n\geq m$, we have
\[
\Tr_{K[\mathfrak{p}^n]^G/k}(z_n) = \Tr_{K[\mathfrak{p}^n]/k}(y_n) = A_nP_k.
\]
On the other hand, by the transitivity of trace,
\begin{align*}
    \Tr_{K[\mathfrak{p}^n]^G/k}(z_n) &= \Tr_{K[\mathfrak{p}^m]^G/k}\Tr_{K[\mathfrak{p}^n]^G/K[\mathfrak{p}^m]^G}(z_n) \\
    &= [K[\mathfrak{p}^n]^G:K[\mathfrak{p}^m]^G]\Tr_{K[\mathfrak{p}^m]^G/k}(z_n).
\end{align*}

Let $\{Q\}$ be a basis of $E(k)$ and we write 
\[P_k = cQ,\quad \Tr_{K[\mathfrak{p}^m]^G/k}(z_n) = c_nQ.\]
Comparing the two expressions for $\Tr_{K[\mathfrak{p}^n]^G/k}(z_n)$ yields
\begin{equation}\label{contradiction_equality}
    [K[\mathfrak{p}^n]^G:K[\mathfrak{p}^m]^G]c_n = A_nc.
\end{equation}
By Lemma~\ref{p-part_size}, we have 
\[v_p([K[\mathfrak{p}^n]^G:K[\mathfrak{p}^m]^G]) \geq f(n-m)-r\lceil \log_p n \rceil - v_p([K[1]:k]) \gg (1-\varepsilon)fn,\]
for fixed $0<\varepsilon<1/2$. Thus, for sufficiently large $n$,
\[[K[\mathfrak{p}^n]^G:K[\mathfrak{p}^m]^G] \gg p^{(1-\varepsilon)fn}.\]
On the other hand, from the growth of $|A_n|$ \eqref{A_n_size}, we have
$\lvert A_n \rvert \ll q_\mathfrak{p}^{n/2} = p^{(1/2)fn}$ for large $n$.
Therefore, \eqref{contradiction_equality} yields 
\[\lvert c \rvert \gg p^{(1/2-\varepsilon)fn}.\]
Since $c$ is a constant independent of $n$, letting $n \to \infty$ yields a contradiction. This completes the proof of  Theorem~\ref{thm:main}.

\section{Proof of Theorem~\ref{analytic_rank_0}}\label{sec_analytic_rank_0}

In this section, we prove Theorem~\ref{analytic_rank_0}. Throughout this section we assume that:
\begin{itemize}
    \item $k$ is a global function field of characteristic $>3$,
    \item $E/k$ is a non-isotrivial elliptic curve,
    \item $E/k$ has analytic rank $0$,
    \item $G \subset G_k$ is a topologically finitely generated closed subgroup.
\end{itemize}
and we assume, moreover, that there exists an imaginary quadratic extension $K/k$ such that:
\begin{enumerate}
    \item $E/K$ has analytic rank $1$,
    \item $E/K$ satisfies the Heegner hypothesis.
\end{enumerate}

By the Gross--Zagier formula, the Heegner point
\[P_K \in E(K)\]
is non-torsion.
Since $E/k$ has analytic rank $0$, the Mordell--Weil group $E(k)$ is finite. Hence, the trace
\[P_k := \mathrm{Tr}_{K/k}(P_K)\]
is torsion.

This condition seemingly appears to preclude the direct application of the argument from Section~\ref{sec_analytic_rank_1}, as it necessitates a non-torsion trace. To overcome this difficulty, we employ the strategy introduced in \cite[Section~4.2]{CI26} as follows.

Let $H = G \cap G_K$, and let
\[y_n \in E(K[\mathfrak p^n])\]
be the Heegner points. Define
\[z_n := \Tr_{K[\mathfrak p^n]/K[\mathfrak p^n]^H}(y_n) \in E\bigl(K[\mathfrak p^n]^H\bigr).\]
Using the same argument as in Section~\ref{sec_analytic_rank_1}, we can show that the set $\{z_n\}_{n \ge 1}$ generates a subgroup of infinite rank in $E\bigl((k^{\mathrm{sep}})^H\bigr)$.

If $H=G$, then there is nothing to prove; therefore, we may assume $H \neq G$ and choose an element $\sigma \in G  ~\setminus~ H$. Then $[G:H]=2$ and under the projection $\Gal(k^{\mathrm{sep}}/k) \rightarrow \Gal(K[\mathfrak{p}^n]/k)$, the images of $G$ and $H$ satisfy  $G=H\rtimes\langle\sigma\rangle$.

Note that $K[\mathfrak p^n]/K$ is abelian, and $K[\mathfrak p^n]/k$ is Galois with a dihedral Galois group:
\[\Gal(K[\mathfrak p^n]/k) \cong \Gal(K[\mathfrak p^n]/K)\rtimes \langle \sigma \rangle,\]
where
\[\sigma \tau \sigma^{-1} = \tau^{-1}\qquad(\tau \in \Gal(K[\mathfrak p^n]/K)).\]
Now, applying the same argument as in \cite[Section~4.2]{CI26},  we conclude that $E\bigl((k^{\mathrm{sep}})^G\bigr)$ also has infinite rank, thereby proving the theorem.


\end{document}